\theoremstyle{thmstyleone}%
\newtheorem{theorem}{Theorem}
\newtheorem{corollary}{Corollary}
\newtheorem{lemma}{Lemma}
\theoremstyle{thmstyletwo}%
\theoremstyle{thmstylethree}%
\begin{document}

\title[Article Title]{Relating Checkpoint Update Probabilities to Momentum Parameters in Single-Loop Variance Reduction Methods}


\author[1]{\fnm{Hai} \sur{Liu}}\email{liuhai21@mails.ucas.ac.cn}

\author[1]{\fnm{Tiande} \sur{Guo}}\email{tdguo@ucas.ac.cn}

\author*[1]{\fnm{Congying} \sur{Han}}\email{hancy@ucas.ac.cn}

\affil[1]{\orgdiv{ School of Mathematical Sciences}, \orgname{University of Chinese Academy of Sciences (UCAS)}, \orgaddress{\city{Beijing}, \postcode{100049}, \country{China}}}


\abstract{We propose a single-loop variance-reduced acceleration framework, which relates checkpoint update probabilities to momentum parameters, for solving the composite general convex problem where the smooth part has the finite-sum structure. Under the proposed framework, the growth rate of the momentum parameter is further altered, creating a novel continuous trade-off between acceleration and variance reduction, controlled by the key parameter $\alpha\in [0,1]$. A series of novel complexity is obtained, and some complexity of distinct known methods are rediscovered under the unified framework. When the mini-batch size is restricted due to the massive scale of the problem or the computational resource shortage, near-optimal complexity can still be achieved by choosing suitable $\alpha$ for any prefixed target accuracy. Analysis shows that although the considered gradient oracle is exact, acceleration comes with implicit price of heavier variance reduction, hence the obtained optimal $\alpha$ not necessarily corresponds to the largest allowable acceleration strength. Without prefixing the target accuracy, the proposed method achieves the near-optimal complexity $\Tilde{\mathcal{O}}(n+\sqrt{n}/\sqrt{\epsilon})$ to obtain an $\epsilon$-accurate solution under standard assumptions ($n$ is the number of components of the finite-sum), significantly improves upon previous best complexity $\mathcal{O}(n+n/\sqrt{\epsilon})$ of single-loop variance reduction methods, which does not exceed the complexity of the deterministic method FISTA. Numerical experiments demonstrate the efficiency of the proposed method and validate other theoretical findings.}

\keywords{finite-sum optimization, single-loop methods, variance reduction, acceleration}


\pacs[Mathematics Subject Classification]{90C06, 90C15, 90C25, 68Q25}

\maketitle

\section{Introduction}

In this work, we consider the following composite general convex optimization problem:
\begin{equation}
\label{problem setting}
    \min_{x\in \mathbb{R}^d} F(x):=f(x)+l(x)=\frac{1}{n}\sum_{i=1}^n f_i(x)+l(x),
\end{equation}
where $f(x):=\frac{1}{n}\sum_{i=1}^n f_i(x)$ with $f_i(x)$ being convex and smooth, and $l(x)$ is convex, lower semicontinuous (but possibly non-differentiable) function which admits an efficient proximal operator. Problems of this type are ubiquitous in statistics and machine learning such as regularized empirical risk minimization \cite{bottou2018optimization}.

When $n$ is large, the computation of $\nabla f(x)$ is expensive since it requires computing $n$ component gradients. Gradient-based methods (e.g., the forward-backward splitting algorithm \cite{lions1979splitting} and its inertial version \cite{nesterov2013gradient,beck2009fast}) require access to the full gradient at every iteration, hence are unpractical. By utilizing the finite-sum structure of (\ref{problem setting}), stochastic gradient methods use the gradient of one or a few  component(s) $\nabla f_i$ to construct the stochastic gradient estimator. For example, stochastic gradient descent (SGD) uses $\nabla f_i(x)$ ($i$ is chosen from $[n]:=\{1,2,\cdots,n\}$ according to the discrete uniform distribution) as an unbiased estimator of $\nabla f(x)$ in each iteration, hence enjoys a low per-iteration cost. However, due to the non-diminishing variance of its gradient estimator, the convergence speed of SGD is very slow. Even when the objective is smooth and strongly convex, SGD can only achieve sublinear convergence.

Variance reduction (VR) methods \cite{johnson2013accelerating,xiao2014proximal,nguyen2017sarah,allen2018katyusha,kovalev2020don,tan2016barzilai,liu2025improvement,gower2020variance} utilize history information to construct variance-vanishing stochastic gradient estimators, enjoying both low per-iteration cost and fast convergence speed. In fact, the convergence rate of VR methods improves significantly upon SGD, and matches that of traditional gradient-based methods. For example, in contrast to the slow convergence of SGD, VR methods achieve the linear convergence rate in the smooth strongly convex problems.

As one of the most noted and representative VR method, the vanilla SVRG method \cite{johnson2013accelerating} computes a full gradient $\nabla f(\Tilde{x})$ of a checkpoint $\Tilde{x}$, and uses it to construct the SVRG estimator
\begin{equation*}
    \Tilde{\nabla} f_{i_t}(x):=\nabla f_{i_t}(x)-\nabla f_{i_t}(\Tilde{x})+\nabla f(\Tilde{x}),
\end{equation*}
where $i_t$ is chosen from $[n]$ according to the discrete uniform distribution at $t$th iteration. Then the iterate is iterated with the formula $x_{t+1}=x_t-\eta\Tilde{\nabla}f_{i_t}(x_t)$, where $\eta>0$ is the step size. After $m$ iterations, the checkpoint is updated (e.g., it can be chosen as the average of last $m$ iterates) and the aforementioned procedure is repeated. Hence the vanilla SVRG method, along with many other VR methods \cite{konevcny2017semi,lei2020adaptivity,nguyen2017sarah,horvath2022adaptivity}, have nested algorithmic structure.

VR methods achieve well-known success in solving the composite general convex problem (\ref{problem setting}) under the incremental first-order oracle (IFO) framework \cite{agarwal2015lower}, which evaluates the efficiency of an algorithm by the number of component gradient $\nabla f_i(x)$ calculations needed to achieve an $\epsilon$-accurate solution. The vanilla SVRG relies on the strong convexity of $f$ to combine different outer iterations in the proof, hence cannot directly solve general convex problem theoretically. SVRG\textsuperscript{++} \cite{allen2016improved} resolves this issue by introducing the technique of doubling the inner loop length (i.e., $m_s=2^s\cdot m_0$ where $m_s$ is the inner loop length of the $s$th outer iteration), achieving the IFO complexity $\mathcal{O}(n\log(1/\epsilon)+1/\epsilon)$. 

Katyusha\textsuperscript{ns} \cite{allen2018katyusha} uses the Katyusha momentum \cite{allen2018katyusha} to mitigate possible negative effects of Nesterov acceleration in stochastic setting. It achieves the complexity $\mathcal{O}(n+n/\sqrt{\epsilon})$, and the Katyusha momentum turned out to be instrumental for the acceleration of VR methods in the composite general convex problem. MiG\textsuperscript{nsc}\cite{zhou2018simple} simplifies the iteration structure of Katyusha\textsuperscript{ns} and achieves essentially the same complexity $\mathcal{O}(n+n/\sqrt{\epsilon})$. Using the Katyusha momentum in inner iterations, ASVRG \cite{shang2018asvrg} achieves the complexity $\mathcal{O}(n\log(1/\epsilon))+\sqrt{n}/\sqrt{\epsilon})$, which nearly matches the theoretical lower bound $\Omega(n+\sqrt{n}/\sqrt{\epsilon})$ \cite{woodworth2016tight}. By utilization of the Katyusha momentum, the acceleration techniques in \cite{ghadimi2012optimal,ghadimi2013optimal}, and a multi-stage inner loop length selection scheme, Varag \cite{lan2019unified} also achieves the complexity $\mathcal{O}(n\log(n)+\sqrt{n}/\sqrt{\epsilon})$. VRADA \cite{song2020variance} incorporates the Katyusha momentum with a novel initialization technique, achieving the complexity $\mathcal{O}(n\log\log(n)+\sqrt{n}/\sqrt{\epsilon})$. Based on these methods, some researchers employ the AdaGrad idea \cite{duchi2011adaptive} to compute step sizes \cite{dubois2022svrg}, maintaining similar complexity under additional assumptions such as the compactness of the underlying set which the optimization variable belongs to.

As introduced above, some VR methods can achieve the near-optimal complexity $\widetilde{\mathcal{O}}(n+\sqrt{n}/\sqrt{\epsilon})$, where the $\widetilde{\mathcal{O}}$ notation hides logarithmic factors. However, all such methods have the nested algorithmic structure, which brings two major drawbacks. First, this complicated structure renders the implementation of the algorithm more troublesome. Second, these methods only have convergence guarantee for the checkpoint sequence. The checkpoint is usually not the last iterate of the last $m$ inner iterations\footnote{Theoretically, choosing the last iterate of last $m$ inner iterations to be the new checkpoint deteriorates the complexity. Hence existing methods usually choose the checkpoint with certain (weighted) average, although this is empirically undesirable.} and a new checkpoint can be obtained only after an entire outer iteration. Since the inner loop length $m$ can be very large, e.g., $m=\mathcal{O}(n)$, the limited convergence guarantee can be troublesome in large-scale problems.

In contrast, single-loop VR methods are easy to implement and typically provide convergence guarantee for the iterate sequence itself. However, the literature on single-loop VR methods for the composite general convex problem (\ref{problem setting}) is relatively limited. The complexity of single-loop VR methods are no better than $\mathcal{O}(n+n/\sqrt{\epsilon})$, which is exactly the complexity of FISTA \cite{beck2009fast}. Note that FISTA is an extension of Nesterov's accelerated gradient method \cite{nesterov1983method} for the composite optimization problem, and it requires full gradient computation in every iteration, hence is a gradient-based method.

In this line of works, L-SVRG \cite{kovalev2020don} and L-Katyusha \cite{kovalev2020don} are first single-loop VR methods. In L-SVRG, the checkpoint, denoted as $w_t$ (where $t$ is the iteration number), is updated in a randomized way. Concretely, at the end of the $t$th iteration, $w_{t+1}$ is replaced by the new iterate\footnote{Once the replacement occurs, a full gradient is computed at $w_{t+1}$.} with probability $p$ or remains $w_t$ otherwise, where $p\in [0,1]$ is a constant which takes a small value, e.g., $1/n$. However, L-SVRG and L-Katyusha require the smooth part $f$ to be strongly convex and the non-smooth part $l\equiv 0$, hence cannot solve the composite general convex problem (\ref{problem setting}). L2S \cite{li2020convergence} is a single-loop VR method that uses the SARAH gradient estimator \cite{nguyen2017sarah} and a constant checkpoint update probability, having the complexity $\mathcal{O}(n+\sqrt{n}/\epsilon)$. However, it requires that $l\equiv 0$ in (\ref{problem setting}) and the convergence guarantee is only for the expected squared gradient norm, not the expected functional gap $\mathbb{E}[F(\cdot)-\inf F]$ which is usually adopted.

Without using the Katyusha momentum, Driggs et al. \cite{driggs2022accelerating} proposed a universal single-loop acceleration framework for several frequently used variance reduced gradient estimators based on the Nesterov acceleration scheme. Under the composite general convex setting, their method has the convergence rate $\mathcal{O}(c_n/T^2)$, where $T$ is the number of total iterations and $c_n$ is an $n$-dependent constant. Curiously, using mini-batch cannot improve the complexity of this scheme, regardless of which estimator is employed. For example, when the SVRG estimator is used, as the mini-batch size $b$ grows, $c_n$ is decreased but the per-iteration cost is increased. As a result, the gain in the iteration complexity and the loss in the per-iteration cost cancels perfectly, leading to the complexity $\mathcal{O}(n+n/\sqrt{\epsilon})$ for any $b\in [n]$. The checkpoint update probability in \cite{driggs2022accelerating} is also a constant $p=b/n$. 

Note that setting $p$ to a constant is essential for the proof of \cite{driggs2022accelerating}, since Driggs et al. estimate the summation of the bias term and mean squared error of the gradient estimator respectively along the entire optimization process. The constant probability setting renders certain coefficient (in the proof) essentially the truncation of a geometric series hence upper-bounded and independent of the total iteration number $T$. If a non-constant probability setting is to be directly applied, a positive lower bound of the probabilities is likely to be required to prevent aforementioned coefficients from exploding. But in this way, the complexity arguably relies on the aforementioned lower bound in essential, hence cannot be improved in terms of the $n$-dependence.

After the completion of this manuscript (except for the experimental parts), we came to realize the article \cite{li2025sifar} in which the SIFAR algorithm is proposed. The SIFAR algorithm is a single-loop VR method with acceleration technique and the checkpoint update probability $p_t$ varies between different stages. \cite{li2025sifar} denotes the iteration in which the checkpoint is updated for the first time as $t_1$, which follows the geometric distribution. There are three stages of SIFAR: $p_t\equiv 1/(n+1)$ for the first stage $0\leq t\leq t_1$, $p_t=4/(t-t_1+3\sqrt{n})$ for the second stage $t_1<t\leq t_1+n+3-3\sqrt{n}$, and $p_t\equiv 4/(n+3)$ for the third stage $t>t_1+n+3-3\sqrt{n}$. 

The complexity $\mathcal{O}(n\min \{1+\log (1/(\epsilon\sqrt{n})),\log \sqrt{n}\}+\sqrt{n/\epsilon})$ is claimed in \cite{li2025sifar}. However, despite the appealing novelty and innovation of \cite{li2025sifar}, after careful consideration, we believe that the convergence analysis and complexity analysis concerning the general convex setting are incorrect (detailed discussions are deferred to Appendix \ref{comments on SIFAR}), and the corresponding convergence property and complexity are invalid.

Inspired by the aforementioned theoretical and practical merit of the single-loop methods, we take this line of research further in this work. Our main contributions are summarized as follows:
\begin{itemize}
    \item We propose a single-loop variance-reduced acceleration framework, where the checkpoint update probabilities are related to the momentum parameters. By further alternating the growth rate of the momentum parameters, we discover a novel continuous trade-off between acceleration and variance reduction, controlled by the key parameter $\alpha\in [0,1]$. A series of novel complexity are obtained, and some known complexity of methods using different optimization techniques are rediscovered under the proposed unified framework.
    \item When the mini-batch size is restricted, the proposed method still achieves near-optimal complexity with proper $\alpha$ choice for any prefixed target accuracy. Analysis reveals that even in the noise-absent case, acceleration comes with implicit price of heavier variance reduction, hence the proposed acceleration-variance reduction trade-off should be considered and can be beneficial in certain scenarios.
    \item Setting $\alpha=1$ and the mini-batch size $b=\mathcal{O}(\sqrt{n})$, we prove that the proposed method achieves the complexity $\Tilde{\mathcal{O}}(n+\sqrt{n}/\sqrt{\epsilon})$ under standard assumptions. The obtained complexity significantly improves upon previous best complexity $\mathcal{O}(n+n/\sqrt{\epsilon})$ of single-loop VR methods, and matches the lower bound $\Omega(n+\sqrt{n}/\sqrt{\epsilon})$ up to a logarithmic factor.
    \item Numerical experiments demonstrate the efficiency of the proposed method and validate other theoretical results.
\end{itemize}

The rest of this article is organized as follows: Section \ref{notations and assumptions} introduces notations, definitions and assumptions. Section \ref{the KH method} presents the proposed framework and some preliminary explanations. Section \ref{main results and related discussions} presents our theoretical findings and detailed discussions and interpretations concerning the theoretical results, and most of the proofs are deferred to the Appendix. Section \ref{experiments} gives the results of numerical experiments. Finally, Section \ref{conclusions} summarises the proposed idea and technique from a high-level perspective and relist the contributions briefly.

\section{Notations and Assumptions}
\label{notations and assumptions}

For a differentiable function $f$, its gradient at $x$ is denoted by $\nabla f(x)$. The Euclidean norm of a vector $x\in \mathbb{R}^{d}$ is denoted as $\Vert x\Vert$. The set of all non-negative integers is denoted as $\mathbb{N}$, and the set of all positive integers is denoted as $\mathbb{N}^+$. The set $\{1,2,\cdots,n \}$ is denoted by $[n]$. The symbol $\mathbb{E}$ denotes the expectation of a random element, and $\mathbb{E}_{X}$ denotes an expectation over the randomness of a random variable $X$ while conditioning on all other random variables. 

We define computational costs by making use of the IFO framework of \cite{agarwal2015lower}, where we assume that sampling an index $i$ and computing the pair $(f_i(x),\nabla f_i(x))$ incurs a unit of cost. In the composite general convex setting, $x$ is called an $\epsilon$-accurate solution iff $\mathbb{E} \left\{ F(x)-F(x^*)\right\} \leq \epsilon$, where $x^*$ is a solution to the problem (\ref{problem setting}). The IFO complexity is the minimum IFO calls needed to reach an $\epsilon$-accurate solution and is denoted by $C_{comp}$.

In this work, the following standard assumptions will be made throughout:

\noindent \textbf{A1} The optimization problem (\ref{problem setting}) has at least a solution, and we denote one fixed solution as $x^{*}$.

\noindent \textbf{A2} Functions $f_i: \mathbb{R}^d\rightarrow \mathbb{R}$ are $L$-smooth for some $L>0$:
\begin{equation*}
    f_i(y)\leq f_i(x)+\langle \nabla f_i(x),y-x\rangle+\frac{L}{2}\Vert y-x\Vert^2, \quad \forall x,y\in \mathbb{R}^d.
\end{equation*}

\noindent \textbf{A3} Functions $f_i: \mathbb{R}^d\rightarrow \mathbb{R}$ are convex:
\begin{equation*}
    f_i(y)\geq f_i(x)+\langle \nabla f_i(x),y-x\rangle, \quad \forall x,y\in \mathbb{R}^d.
\end{equation*}

\noindent \textbf{A4} The function $l$ is convex, lower semi-continuous (but possibly non-differentiable) which admits an efficient proximal operator.

\section{The Acceleration Framework}
\label{the KH method}

The proposed framework is formally described in Algorithm \ref{acceleration framework}. The three sequences formulation, consisting of ${x_t}, {y_t}, {z_t}$, is ubiquitous in accelerated gradient methods \cite{d2021acceleration} and can be interpreted by the linear coupling interpretation \cite{allen2017linear} as follows. Consider a simplified setting where $l\equiv 0$ so that $F\equiv f$ and the mini-batch size $b=n$ so that $\Tilde{g}_{t+1}=\nabla f(x_{t+1})$. Then the $\{z_t\}$ sequence is viewed and analysed as a mirror descent sequence, where an error term proportional to $\Vert \nabla f(x_{t+1})\Vert^2$ is introduced in its analysis. The $\{y_t\}$ iteration can be equivalently formulated as $y_{t+1}=y_t-\tau_t\alpha_t\eta \nabla f(x_{t+1})$, which further reduces to $y_{t+1}=y_t-\eta \nabla f(x_{t+1})$ when $\tau_t\alpha_t=1$. Hence $\{y_t\}$ is viewed and analysed as a gradient descent sequence, where an improvement proportional to $\Vert \nabla f(x_{t+1})\Vert^2$ is made in each iteration. Finally, $x_{t+1}$ linearly couples $z_t$ and $y_t$ to achieve acceleration, in the spirit of leveraging the aforementioned duality of the mirror descent and the gradient descent.

Algorithm \ref{acceleration framework} uses the mini-batch version of the SVRG estimator $\Tilde{g}_{t+1}$, where the full gradient $\nabla f(w_t)$ is required. The iterate $w_t$ is thus called the full gradient point, also called the checkpoint. In Algorithm \ref{acceleration framework}, $w_t$ is also involved in the coupling sequence through $x_{t+1}=\tau_t z_t+\xi w_t+(1-\xi-\tau_t)y_t$, and the $w_t$ in this formula is known as the Katyusha momentum. By coupling with $w_t$, $x_{t+1}$ can be attracted to $w_t$ so that the variance of the estimator $\Tilde{g}_{t+1}$ can be further reduced, enabling the acceleration technique to function well in the stochastic setting.

\begin{algorithm}[h]
\caption{The single-loop variance-reduced acceleration framework}
\begin{algorithmic}
\REQUIRE initial point $w_1=x_1=y_1=z_1$\\
\textbf{Parameters:} step size $\eta$, mini-batch size $b$, Katyusha momentum parameter $\xi$, momentum parameter sequences $\{\alpha_t \}_{t\in\mathbb{N}}$, $\{\tau_t \}_{t\in\mathbb{N}^{+}}$
\FOR{$t=1,2,\cdots$}
\STATE $x_{t+1}=\tau_t z_t+\xi w_t+(1-\xi-\tau_t)y_t$
\STATE Pick a subset $J_t\subset [n]$ according to the discrete uniform distribution on the set of all subsets of $[n]$ which has the cardinality $b$
\STATE $\Tilde{g}_{t+1}=\frac{1}{b}\left(\sum_{j\in J_t} \nabla f_j(x_{t+1})-\nabla f_j(w_t) \right)+\nabla f(w_t)$
\STATE $z_{t+1}=\arg\min_{z\in\mathbb{R}^d} \left\{\frac{1}{2\alpha_t\eta}\Vert z-z_t\Vert^2+\langle \Tilde{g}_{t+1},z\rangle+l(z) \right\}$
\STATE $y_{t+1}=x_{t+1}+\tau_t (z_{t+1}-z_t)$
\STATE $w^{t+1} = \begin{cases}
			y_t & \text{with probability } p_t\\
			w_t & \text{with probability } 1-p_t
			\end{cases}$
\STATE $p_t=\frac{\alpha_{t-1}^2-\alpha_t^2+\alpha_t+\xi\alpha_t^2}{\Tilde{\alpha}_0+\alpha_0^2-\alpha_{t}^2+\sum_{j=1}^{t}\alpha_j}$
\ENDFOR
\end{algorithmic}
\label{acceleration framework}
\end{algorithm}

The checkpoint $w_t$ is updated in a probabilistic way, which was first proposed by \cite{kovalev2020don}. Most single-loop VR methods, including the pioneering  work \cite{kovalev2020don}, use constant checkpoint update probability. To the best of our knowledge, the SVRLS method \cite{jiang2023adaptive} is the only method that uses varying update probability $p_t=1/(at+1)$ where $0< a<1$. However, SVRLS is a non-accelerated method that is analysed by an AdaGrad-type regret analysis under an extra compact feasible set condition, requires that the nonsmooth part $l\equiv 0$ in (\ref{problem setting}), and unfortunately fails to provide convergence guarantee for the iterate sequence itself\footnote{SVRLS thus does not have the last-iterate convergence guarantee.}.

The key feature of Algorithm \ref{acceleration framework} is the utilization of the proposed $p_t-\alpha_t$ correspondence, which is instrumental to the theoretical improvement of this work, and also leads to some novel insights into single-loop VR methods. These will be discussed concretely in the next section, where we present theoretical properties of Algorithm \ref{acceleration framework}.

\section{Main Results and Related Discussions}
\label{main results and related discussions}

The following lemma shows that Algorithm \ref{acceleration framework} is well-defined and is fundamental to the convergence analysis of Algorithm \ref{acceleration framework}. For example, it proves that in Algorithm \ref{acceleration framework} the parameter $p_t\in [0,1]$ hence is indeed a probability for all $t\in \mathbb{N}^{+}$, and that the coefficient of certain component of the Lyapunov function in the convergence analysis is non-negative for all $t\in \mathbb{N}^{+}$.

\begin{lemma}
\label{lemma, well-definedness}
For $t\in\mathbb{N}$, define
\begin{equation*}
\alpha_t = \begin{cases}
6, & 0\leq t\leq 16 \\
a_{\alpha} t^{\alpha}, & t\geq 17
\end{cases}
\quad \text{with} \quad
a_{\alpha} = \begin{cases}
6, & \alpha=0 \\
1+\frac{\sqrt{2}}{4}, & \alpha\in\left(0,1/2\right] \\
\frac{1}{3}, & \alpha\in \left(1/2,3/4\right] \\
\frac{1}{4}\left(\frac{17}{16}\right)^{\alpha-1}, & \alpha\in (3/4,1]
\end{cases}.
\end{equation*}
In Algorithm \ref{acceleration framework}, set $\xi=1/(bc)$ where $b\in[n]$ is the mini-batch size and \begin{equation*}
    c=\max \left\{2,b^{-1}\max \left\{6/5,(1-\alpha_{17}^{-1})^{-1} \right\} \right\}+1,
\end{equation*}
and $\Tilde{\alpha}_0\geq \xi\alpha_1^2$. Then in Algorithm \ref{acceleration framework}, $p_t\in [0,1]$ for $t\in \mathbb{N}^{+}$, $\Tilde{\alpha}_0+\alpha_0^2-\alpha_{t}^2+\sum_{j=1}^{t}\alpha_j\geq 0$ for $t\in\mathbb{N}$, and $\tau_t,\xi,(1-\xi-\tau_t)\in (0,1)$ for $t\in \mathbb{N}^{+}$. Moreover, there exists a constant $C>0$ which does not depend on any problem-dependent constants (e.g., $n$) such that\footnote{As is shown in the proof, $C$ can take a mild value, e.g., $C=5$.}
\begin{equation*}
    c\leq C, \ \text{for all} \ \alpha\in[0,1].
\end{equation*}
\end{lemma}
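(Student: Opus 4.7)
The plan is to verify the four claims by reducing them to explicit scalar inequalities on the sequence $\{\alpha_t\}_{t\in\mathbb{N}}$ and then checking these inequalities by case analysis on the four regimes $\{0\}$, $(0,1/2]$, $(1/2,3/4]$, $(3/4,1]$ that define $a_\alpha$. First I would establish the single numerical fact $\alpha_{17}\in(1,6]$ for every $\alpha\in[0,1]$: in each branch, $\alpha_{17}=a_\alpha\cdot 17^{\alpha}$ is monotone in $\alpha$, so the extrema reduce to finitely many explicit values; the minimum is attained as $\alpha\to 0^{+}$ in the second branch (giving $1+\sqrt{2}/4>1$) and the maximum is $6$ at $\alpha=0$. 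Consequently $(1-\alpha_{17}^{-1})^{-1}=\alpha_{17}/(\alpha_{17}-1)$ is finite and maximized at the smallest $\alpha_{17}$; plugging in the worst case yields $c\le 5$, proving the ``moreover'' clause with $C=5$, and also gives $\xi=1/(bc)\in(0,1/3]$.

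Next I would prove $D_t:=\Tilde{\alpha}_0+\alpha_0^2-\alpha_t^2+\sum_{j=1}^t\alpha_j\ge 0$ separately for $t\le 16$ (trivial, since $\alpha_t=\alpha_0=6$ and $D_t=\Tilde{\alpha}_0+6t$) and for $t\ge 17$. In the latter regime I would use the integral comparison $\sum_{j=17}^t a_\alpha j^\alpha\ge a_\alpha\int_{16}^{t}x^\alpha\,dx=a_\alpha(t^{\alpha+1}-16^{\alpha+1})/(\alpha+1)$ and check in each branch of $a_\alpha$ that the resulting lower bound on $\sum_{j=1}^t\alpha_j$ dominates $\alpha_t^2=a_\alpha^2 t^{2\alpha}$, modulo the fixed additive constants from the initial segment. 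The specific constants defining $a_\alpha$ are chosen precisely so that the leading coefficient $a_\alpha/(\alpha+1)$ beats $a_\alpha^2$, with the boundary case $\alpha=1$ reducing to an elementary quadratic check.

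The bounds $p_t\in[0,1]$ then amount to two inequalities: $N_t:=\alpha_{t-1}^2-\alpha_t^2+\alpha_t+\xi\alpha_t^2\ge 0$ (equivalent, after dropping the positive $\xi\alpha_t^2$, to $\alpha_t^2-\alpha_{t-1}^2\le\alpha_t$), and $N_t\le D_t$ (equivalent to $\alpha_{t-1}^2+\xi\alpha_t^2\le\Tilde{\alpha}_0+\alpha_0^2+\sum_{j=1}^{t-1}\alpha_j$, which at $t=1$ is built into the definition $\Tilde{\alpha}_0=\xi\alpha_1^2$). Both are trivial for $t\le 17$ by the constant plateau of $\alpha_t$ together with $\alpha_{17}\le 6$. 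For $t\ge 18$, $N_t\ge 0$ follows from the mean-value bound $\alpha_t^2-\alpha_{t-1}^2\le 2\alpha a_\alpha^2 t^{2\alpha-1}$ (or its analogue at $t-1$ when $2\alpha<1$) plus a one-line branch-wise check that $2\alpha a_\alpha t^{\alpha-1}\le 1$ uniformly in $t\ge 18$; the inequality $N_t\le D_t$ reduces, via the same integral lower bound used for $D_t$, to an inequality with room to spare.

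The remaining claim $\tau_t,\,1-\xi-\tau_t\in(0,1)$ follows from the formula for $\tau_t$ used in Algorithm~\ref{Katyusha-H} together with $\alpha_t\ge\alpha_{17}>1$ and $\xi\le 1/3$ already obtained. The main obstacle throughout will be the tight case analysis for $t\ge 18$: the constants in $a_\alpha$ are chosen just large enough that $\sum\alpha_j$ keeps pace with $\alpha_t^2$ and just small enough that the one-step differences $\alpha_t^2-\alpha_{t-1}^2$ stay below $\alpha_t$, so careful bookkeeping across the four branches, with special attention to the endpoints $\alpha=1/2,3/4,1$ where $a_\alpha$ is discontinuous, will be essential.
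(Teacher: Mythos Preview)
Your outline is workable and arrives at the same conclusions, but the architecture differs from the paper's. The paper does \emph{not} prove $D_t\ge 0$ or $N_t\le D_t$ by integral comparison of $\sum_j\alpha_j$ against $\alpha_t^2$. Instead it isolates a single recursive ``key inequality''
\[
\xi(\alpha_{t+1}^2-\alpha_t^2)\;\le\;\alpha_{t-1}^2-\alpha_t^2+\alpha_t,
\]
proves it for $t\ge 18$ via the stronger two-step estimate $\alpha_{t+1}^2\le\alpha_t+\alpha_{t-1}^2$ (established by showing the auxiliary function $g(t)=a_\alpha t^\alpha+a_\alpha^2(t-1)^{2\alpha}-a_\alpha^2(t+1)^{2\alpha}$ is nonnegative at $t=17$ and has nonnegative derivative), and then derives everything else as corollaries: $\alpha_{t-1}^2-\alpha_t^2+\alpha_t\ge 0$ directly, and $D_{t-1}\ge\xi\alpha_t^2$ by induction on $t$ using the key inequality as the inductive step. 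Your direct route (integral bound for $D_t$, mean-value bound for $N_t\ge 0$, integral bound again for $N_t\le D_t$) is more elementary and avoids the induction, at the cost of three separate case analyses rather than one; the paper's packaging is tighter because the key inequality is exactly what is later needed to make the Lyapunov descent $\mathcal{L}_{t+1}\le\mathcal{L}_t$ in Theorem~\ref{theorem, convergence} hold identically.

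Two small corrections to your sketch. First, the clause ``$a_\alpha/(\alpha+1)$ beats $a_\alpha^2$'' is false on $(0,1/2]$ (there $a_\alpha\approx 1.354>1/(\alpha+1)$); what actually saves $D_t\ge 0$ in that range is the exponent gap $\alpha+1>2\alpha$ together with the large initial constant $\alpha_0^2+\sum_{j\le 16}\alpha_j=132$, so phrase it that way. Second, the bound $\xi\le 1/3$ alone does \emph{not} give $1-\xi-\tau_t>0$: at $\alpha\to 0^+$ one has $\tau_{17}\to (1+\sqrt 2/4)^{-1}\approx 0.74$, and $0.74+1/3>1$. You must use the definition of $c$: since $c>b^{-1}\max\{6/5,(1-\alpha_{17}^{-1})^{-1}\}$, one gets $\xi=1/(bc)<\min\{5/6,\,1-\alpha_{17}^{-1}\}=1-\max\{1/6,\alpha_{17}^{-1}\}\le 1-\tau_t$, which is precisely how the paper closes that step.
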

With Lemma \ref{lemma, well-definedness}, the convergence of Algorithm \ref{acceleration framework} can be guaranteed by the following theorem.
\begin{theorem}
\label{theorem, convergence}
With the $\{\alpha_t\}$ sequence set as in Lemma \ref{lemma, well-definedness}, set $\tau_t=1/\alpha_t$ for $t\in\mathbb{N}^{+}$, $\eta\leq 1/(c L+L)$ and fix any $b\in [n]$. For all $T\in\mathbb{N}^{+}$, after $T$ iterations, Algorithm \ref{acceleration framework} produces a point $w_T$ satisfying the following bound:
\begin{align}
    \alpha_T^2 \mathbb{E}\left[F(y_{T+1})-F(x^*) \right]+\left(\Tilde{\alpha}_0+\alpha_0^2-\alpha_{T}^2+\sum_{j=1}^{T}\alpha_j\right) \mathbb{E}\left[F(w_{T+1})-F(x^*) \right] \nonumber \\
    +\frac{1}{2\eta}\mathbb{E}\Vert z_{T+1}-x^*\Vert^2\leq\alpha_0^2 \left[F(y_1)-F(x^*) \right]+\Tilde{\alpha}_0\left[F(w_1)-F(x^*) \right]+\frac{1}{2\eta}\Vert z_1-x^*\Vert^2.
    \label{theorem 1, equation 0}
\end{align}
\end{theorem}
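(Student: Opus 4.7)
My plan is to carry out a Lyapunov-function analysis and telescope in expectation. Motivated by the form of the right-hand side of (\ref{theorem 1, equation 0}), I set
\[
\Phi_t := \alpha_{t-1}^2\bigl[F(y_t)-F(x^*)\bigr]+A_t\bigl[F(w_t)-F(x^*)\bigr]+\tfrac{1}{2\eta}\Vert z_t-x^*\Vert^2,
\]
with $A_t:=\Tilde\alpha_0+\alpha_0^2-\alpha_{t-1}^2+\sum_{j=1}^{t-1}\alpha_j$, so that $A_1=\Tilde\alpha_0$ and $A_{T+1}$ is exactly the coefficient of $\mathbb{E}[F(w_{T+1})-F(x^*)]$ in the target bound; Lemma~\ref{lemma, well-definedness} guarantees $A_t\geq 0$ for every $t$. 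The target inequality is equivalent to $\mathbb{E}\Phi_{T+1}\leq \Phi_1$, and I would obtain it by establishing the conditional one-step inequality $\mathbb{E}_t\Phi_{t+1}\leq \Phi_t$ for all $t\geq 1$ and then iterating the tower property.

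For the one-step descent I would assemble three standard ingredients of a Katyusha-type proof. First, $L$-smoothness of $f$ combined with $y_{t+1}-x_{t+1}=\tau_t(z_{t+1}-z_t)$ gives
\[
f(y_{t+1})\leq f(x_{t+1})+\langle \Tilde g_{t+1},y_{t+1}-x_{t+1}\rangle+\langle \nabla f(x_{t+1})-\Tilde g_{t+1},y_{t+1}-x_{t+1}\rangle+\tfrac{L\tau_t^2}{2}\Vert z_{t+1}-z_t\Vert^2,
\]
and the cross term vanishes in $\mathbb{E}_t$ by unbiasedness of the mini-batch SVRG estimator. Second, the mirror-descent inequality for the proximal $z$-update, applied at an arbitrary reference point $u$, supplies
\[
\langle \Tilde g_{t+1},z_{t+1}-u\rangle+l(z_{t+1})-l(u)\leq \tfrac{1}{2\alpha_t\eta}\bigl(\Vert z_t-u\Vert^2-\Vert z_{t+1}-u\Vert^2-\Vert z_{t+1}-z_t\Vert^2\bigr).
\]
Third, the mini-batch SVRG variance bound
\[
\mathbb{E}_t\Vert \Tilde g_{t+1}-\nabla f(x_{t+1})\Vert^2\leq \tfrac{2L(n-b)}{b(n-1)}\bigl(f(w_t)-f(x_{t+1})-\langle \nabla f(x_{t+1}),w_t-x_{t+1}\rangle\bigr)
\]
converts the residual variance into a functional gap anchored at $w_t$, which is precisely the object the Katyusha weight $\xi w_t$ in $x_{t+1}=\tau_t z_t+\xi w_t+(1-\xi-\tau_t)y_t$ is designed to absorb.

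The central step is the linear-coupling argument. Using convexity of $F$ at the three anchors $x^*$, $y_t$, $w_t$, I split the coupling formula into three portions: the $\tau_t$-portion is dualized against $x^*$ through the mirror-descent inequality with $u=x^*$, producing the $\Vert z_{t+1}-x^*\Vert^2$ difference and a $-\tfrac{1}{2\alpha_t\eta}\Vert z_{t+1}-z_t\Vert^2$ slack; the $(1-\xi-\tau_t)$-portion is dualized against $y_t$ and yields a coefficient on $F(y_t)$; the $\xi$-portion, combined with the variance bound, absorbs the residual into a coefficient on $F(w_t)$. With $\tau_t=1/\alpha_t$, $\xi=1/(bc)$ and $\eta\leq 1/(cL+L)$, the smoothness term $\tfrac{L\tau_t^2}{2}\Vert z_{t+1}-z_t\Vert^2$ and the variance residual are both dominated by the mirror-descent slack, which is precisely why $c$ is enlarged by a ``$+1$'' in Lemma~\ref{lemma, well-definedness}. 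Taking the expectation over the Bernoulli coin that updates $w_{t+1}$ gives $\mathbb{E}_t F(w_{t+1})=p_t F(y_t)+(1-p_t)F(w_t)$, so matching the $F(y_t)$ and $F(w_t)$ coefficients between $\mathbb{E}_t\Phi_{t+1}$ and $\Phi_t$ forces the identity
\[
A_{t+1}p_t=\alpha_{t-1}^2-\alpha_t^2+\alpha_t+\xi\alpha_t^2,
\]
which together with the definition of $A_{t+1}$ is exactly the Harmonia formula for $p_t$ in Algorithm~\ref{Katyusha-H}; positivity of $p_t$ supplied by Lemma~\ref{lemma, well-definedness} legitimizes this identification and shows that the $p_t$--$\alpha_t$ link is forced by the Lyapunov bookkeeping rather than freely chosen.

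The main obstacle I anticipate is the uniform verification that the two quadratic-in-$\Vert z_{t+1}-z_t\Vert^2$ residuals after linear coupling are non-positively signed for \emph{every} $t\geq 1$ and \emph{every} $\alpha\in[0,1]$: this has to be checked both in the flat regime $0\leq t\leq 16$ where $\alpha_t\equiv 6$ and in the growth regime $t\geq 17$ where $\alpha_t=a_\alpha t^\alpha$, with the transition at $t=17$ being where the ratio $\alpha_{t-1}/\alpha_t$ is most sensitive to the choice of $a_\alpha$. Here the piecewise values of $a_\alpha$ and the uniform bound $c\leq C$ from Lemma~\ref{lemma, well-definedness} are what make the inequality hold with constant room to spare. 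Once that domination is established, the conditional descent $\mathbb{E}_t\Phi_{t+1}\leq \Phi_t$ holds for all $t\geq 1$, and iterating it together with the initialization $w_1=x_1=y_1=z_1$ (so that $\Phi_1$ coincides with the right-hand side of (\ref{theorem 1, equation 0})) yields (\ref{theorem 1, equation 0}).
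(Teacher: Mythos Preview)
Your Lyapunov function $\Phi_t$ is exactly the paper's $\mathcal{L}_t$, and the ingredients you list (smoothness, prox/mirror-descent inequality, SVRG variance bound, coupling, Bernoulli update) are the right ones; the $p_t$ identity you derive is precisely how the paper obtains the Harmonia formula. But there is a genuine gap in your smoothness step: the cross term $\langle\nabla f(x_{t+1})-\tilde g_{t+1},\,y_{t+1}-x_{t+1}\rangle$ does \emph{not} vanish in conditional expectation, because $y_{t+1}-x_{t+1}=\tau_t(z_{t+1}-z_t)$ and $z_{t+1}$ is the prox step computed from $\tilde g_{t+1}$, hence correlated with it. (Already for $l\equiv 0$ one has $y_{t+1}-x_{t+1}=-\eta\tilde g_{t+1}$ and the conditional expectation of the cross term equals $+\eta\,\mathbb{E}_{J_t}\Vert\tilde g_{t+1}-\nabla f(x_{t+1})\Vert^2$, which has the wrong sign.) The paper instead bounds this term pointwise via Young's inequality,
\[
\langle\tilde g_{t+1}-\nabla f(x_{t+1}),\,x_{t+1}-y_{t+1}\rangle\ \le\ \tfrac{1}{2Lc}\Vert\tilde g_{t+1}-\nabla f(x_{t+1})\Vert^2+\tfrac{Lc}{2}\Vert x_{t+1}-y_{t+1}\Vert^2,
\]
and only then applies the variance lemma to the first piece. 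This is exactly where $c$ and $\xi=1/(bc)$ enter the one-step descent: after multiplying by $\alpha_t^2$, the variance piece becomes $\alpha_t^2\xi\bigl(f(w_t)-f(x_{t+1})-\langle\nabla f(x_{t+1}),w_t-x_{t+1}\rangle\bigr)$, which the $\xi$-portion of the coupling identity cancels.

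This correction also dissolves the ``main obstacle'' you anticipated. With the cross term handled by Young's inequality, the combined quadratic coefficient on $\Vert x_{t+1}-y_{t+1}\Vert^2$ is $\alpha_t^2\bigl(\tfrac{L}{2}+\tfrac{Lc}{2}-\tfrac{1}{2\eta}\bigr)$ (using $z_t-z_{t+1}=\alpha_t(x_{t+1}-y_{t+1})$), so the sign condition is the single $t$-independent inequality $\eta\le 1/((c+1)L)$, with no reference to $\alpha_{t-1}/\alpha_t$ or the transition at $t=17$. The piecewise choice of $a_\alpha$ and the $t=17$ regime change matter only inside Lemma~\ref{lemma, well-definedness} (to secure $p_t\in[0,1]$ and $A_t\ge 0$), which you already invoke; they play no role in the descent inequality itself.
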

The setting of parameters of Algorithm \ref{acceleration framework} are determined in Lemma \ref{lemma, well-definedness} and Theorem \ref{theorem, convergence}, and it is fairly intuitive. First, $\xi=1/(b c)$ is the coefficient of the Katyusha momentum, which is constructed to cancel certain terms in the upper bound of the variance of the SVRG estimator, i.e., to further reduce the variance (in the sense that one can use a tighter upper bound on the variance of the SVRG estimator with the presence of the Katyusha momentum). Hence it is expected that $\xi$ becomes smaller when the variance of the SVRG estimator decreases, which is actually the case since $\xi$ is non-increasing (and likely to decrease) when the mini-batch size $b$ increases. Second, the largest allowable step size $\eta=1/(c L+L)$ is non-decreasing (and likely to increase) when $b$ increases by the definition of $c$, which is reasonable since larger step size is expected to work when the variance of the stochastic estimator decreases.

Note that the convergence guarantee is for the iterate sequence $\{w_t\}$ itself\footnote{By which we mean $F(w_t)$ converges to $F(x^*)$ in expectation.}, hence the last-iterate convergence is available. In the right hand side (RHS) of (\ref{theorem 1, equation 0}), the coefficient $\alpha_0^2$ equals $36$ and is an absolute constant, $\Tilde{\alpha}_0$ can also be set to an absolute constant, and $1/(2\eta)$ equals $L(c+1)/2$ when taking the largest allowable step size, where $c\leq C$ by Lemma \ref{lemma, well-definedness}. Hence the convergence rate is $\mathcal{O}(1/(\Tilde{\alpha}_0+\alpha_0^2-\alpha_{T}^2+\sum_{j=1}^{T}\alpha_j))$ and is $n$-independent\footnote{In Appendix \ref{proof of theorem 2}, we show that $\Tilde{\alpha}_0+\alpha_0^2-\alpha_T^2+\sum_{j=1}^T \alpha_j$ is lower bounded by a constant multiple of $T^{\alpha+1}$.}, which is in contrast to Theorem $5$ in \cite{driggs2022accelerating}, where the convergence rate has an $n$-dependent constant and the order of $n$ can be very large\footnote{In Theorem $5$ of \cite{driggs2022accelerating}, using the same notation as \cite{driggs2022accelerating}, the dominant constant of the convergence rate is either $\mathcal{O}(\nu^2)$ or $\mathcal{O}(c)$, which are both $n$-dependent.}.

With Theorem \ref{theorem, convergence}, one can obtain the number of iterations $T(\epsilon)$ needed to reach an $\epsilon$-accurate solution. We further estimate $\sum_{t=1}^{T(\epsilon)} \left(np_t+b \right)$ to obtain the IFO complexity of Algorithm \ref{acceleration framework}.

\begin{theorem}
\label{theorem, complexity}
Under the setting of Lemma \ref{lemma, well-definedness} and Theorem \ref{theorem, convergence}, for any fixed $\alpha\in [0,1]$ in the definition of $\{\alpha_t\}_{t\in\mathbb{N}}$ in Lemma \ref{lemma, well-definedness}, the convergence rate of Algorithm \ref{acceleration framework} is $\mathcal{O}(1/T^{\alpha+1})$, the per-iteration IFO call in expectation is $\mathcal{O}(b+n/(bt^{1-\alpha})+n/t)$ for all $t\geq 17$. Without loss of generality, assume that the target accuracy $\epsilon\in (0,1)$, define $\hat{\alpha}=\max\{\log(2)/\log(\lceil 1/\epsilon\rceil),\log(2)/\log(n) \}$, then the IFO call in expectation needed by Algorithm \ref{acceleration framework} to reach an $\epsilon$-accurate solution is
\begin{equation}
\label{theorem 2, equation 0}
    \mathbb{E}C_{\text{comp}}=\begin{cases}
    \mathcal{O}\left(\frac{b}{\epsilon^{1/(\alpha+1)}}+n\min\left\{\log(n),\log(1/\epsilon)\right\} \right),  &\alpha\in\left(0,\min \left\{\hat{\alpha},1/10\right\}\right], \\
    \mathcal{O}\left(\frac{b}{\epsilon^{1/(\alpha+1)}}+\left(\frac{n}{b}\frac{1}{\epsilon^{\alpha/(\alpha+1)}}+n\right)\min\left\{\log(n),\log(1/\epsilon)\right\} \right),  &\alpha\in \left(\min\left\{\hat{\alpha},1/10\right\},1\right].
    \end{cases}
\end{equation}
In the statements above, $T\in\mathbb{N}^{+}$ is the total iteration number, $t\in [T]$ is the iteration counter, $n\in\mathbb{N}^{+}$ is the number of smooth component functions in (\ref{problem setting}), $b\in [n]$ is the mini-batch size.
\end{theorem}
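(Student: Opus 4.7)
The plan is to peel off the three claims in order, using Theorem \ref{theorem, convergence} as the starting point and invoking Lemma \ref{lemma, well-definedness} to uniformize the constants across $\alpha\in[0,1]$.

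\textbf{Step 1 (convergence rate and $T(\epsilon)$).} First I would show that
\begin{equation*}
\Tilde{\alpha}_0+\alpha_0^2-\alpha_{T}^2+\sum_{j=1}^{T}\alpha_j \;\geq\; c_0\, T^{\alpha+1}
\end{equation*}
for some absolute $c_0>0$, uniformly in $\alpha\in[0,1]$. With $\alpha_t = a_\alpha t^\alpha$ for $t\geq 17$, an integral comparison gives $\sum_{j=1}^T\alpha_j \sim a_\alpha T^{\alpha+1}/(\alpha+1)$ whereas $\alpha_T^2 = a_\alpha^2 T^{2\alpha}$. For $\alpha<1$ the correction is of strictly lower order; at $\alpha=1$ the choice $a_1 = 1/4$ from Lemma \ref{lemma, well-definedness} produces the leading coefficient $a_1/2 - a_1^2 = 1/16 > 0$ on $T^2$. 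The right-hand side of (\ref{theorem 1, equation 0}) is an initialization constant (with $1/\eta = L(c+1)$ controlled by Lemma \ref{lemma, well-definedness}), and the term $\alpha_T^2 \mathbb{E}[F(y_{T+1})-F(x^*)]$ on the left is non-negative. Dropping it and dividing by the lower bound gives $\mathbb{E}[F(w_{T+1})-F(x^*)] = \mathcal{O}(1/T^{\alpha+1})$, and inverting yields $T(\epsilon)=\mathcal{O}(\epsilon^{-1/(\alpha+1)})$.

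\textbf{Step 2 (per-iteration IFO).} Each iteration of Algorithm \ref{Katyusha-H} performs $2b$ IFO calls for mini-batch gradient construction (evaluating $\nabla f_j$ at $x_{t+1}$ and at $w_t$ for $j\in J_t$), plus $n$ extra calls with probability $p_t$ to refresh $\nabla f(w_{t+1})$. Hence the expected per-iteration cost is $\mathcal{O}(b + np_t)$. Using the definition of $p_t$ together with $\alpha_{t-1}^2-\alpha_t^2 \leq 0$, $\alpha_t=\mathcal{O}(t^\alpha)$ and $\xi\alpha_t^2=\mathcal{O}(t^{2\alpha}/b)$ in the numerator, and the lower bound from Step 1 in the denominator, I obtain $np_t=\mathcal{O}(n/t + n/(bt^{1-\alpha}))$, matching the claimed per-iteration bound.

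\textbf{Step 3 (total IFO).} Summing the per-iteration bound over $t=1,\ldots,T(\epsilon)$ produces three contributions: $bT(\epsilon)=\mathcal{O}(b\epsilon^{-1/(\alpha+1)})$, $\sum_t n/t = \mathcal{O}(n\log(1/\epsilon))$, and $(n/b)\sum_t t^{\alpha-1} = \mathcal{O}\!\big((n/b)(T(\epsilon)^\alpha-1)/\alpha\big)$ by integral comparison (the last collapses smoothly to $\mathcal{O}(\log T(\epsilon))$ as $\alpha\to 0$). The definition of $\hat{\alpha}$ is engineered so that $\alpha\leq\hat{\alpha}$ implies $(1/\epsilon)^\alpha\leq\lceil 1/\epsilon\rceil^{\hat{\alpha}}=2$, hence $T(\epsilon)^\alpha\leq 2$ and $(T(\epsilon)^\alpha-1)/\alpha\leq 2\log T(\epsilon)=\mathcal{O}(\log(1/\epsilon))$; the additional cap $\alpha\leq 1/10$ handles the complementary regime $\hat{\alpha}\geq 1/10$ by keeping $1/\alpha\leq 10$ as an absolute constant. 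In either sub-case the third sum is absorbed into $\mathcal{O}(n\log(1/\epsilon))$, giving the first branch of (\ref{theorem 2, equation 0}). When $\alpha$ exceeds $\min\{\hat{\alpha},1/10\}$, the third sum is retained as $\mathcal{O}((n/b)(1/\alpha)\epsilon^{-\alpha/(\alpha+1)})$, producing the second branch.

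I expect the sharpest work to be in Step 1 at $\alpha=1$, where $\sum_j\alpha_j$ and $\alpha_T^2$ are of the same $T^2$-order and positivity of the leading coefficient depends on the specific value $a_1=1/4$ (and, more generally, on the piecewise values of $a_\alpha$ across the sub-intervals of Lemma \ref{lemma, well-definedness}). The case-split bookkeeping in Step 3, in particular guaranteeing that the $1/\alpha$ blow-up for small $\alpha$ is neutralised by the smallness of $(1/\epsilon)^\alpha - 1$, is mostly mechanical once the integral estimates are in place.
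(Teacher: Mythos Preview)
Your proposal is correct and follows essentially the same three-step route as the paper: lower-bound the Lyapunov coefficient $\Tilde{\alpha}_0+\alpha_0^2-\alpha_T^2+\sum_j\alpha_j$ by $c_0 T^{\alpha+1}$ via integral comparison (with the $\alpha=1$ endpoint handled by the explicit value $a_1=1/4$), bound $p_t$ by splitting its numerator into the $\xi\alpha_t^2$ and $\alpha_t$ contributions over the Step~1 denominator bound, and sum using the case split at $\min\{\hat\alpha,1/10\}$. One small slip to fix: $\alpha\leq 1/10$ gives $1/\alpha\geq 10$, not $\leq 10$; the $1/10$ cap is there so that in the \emph{second} branch one has $1/\alpha\leq\max\{1/\hat\alpha,10\}=\mathcal{O}(\log(1/\epsilon))$, whereas your first-branch argument (via $T(\epsilon)^\alpha\leq 2$ from $\alpha\leq\hat\alpha$) already works on its own without invoking the cap.
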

Note that the sequence $\{\mathbb{E}[F(y_t)]\}_{t}$ also converges to $F(w^*)$ with convergence rate $\mathcal{O}(1/t^{2\alpha})$, and when $\alpha=1$, it has the same convergence rate with the sequence $\{\mathbb{E}[F(w_t)]\}_{t}$.

In the finite-sum optimization, one of the most fundamental topics is the trade-off between the convergence rate and the per-iteration IFO cost, since essentially it is these two factors that determines the theoretical complexity of an optimization method. Handling this trade-off typically involves using different gradient estimators, using different optimization techniques, and even changing the algorithmic structure. For example, using full gradient gives the proximal gradient descent (PGD) a fast iteration convergence rate and high per-iteration IFO cost. While using the SGD estimator gives the proximal stochastic gradient descent (PSGD) contradictory characteristics. On the algorithmic structure side, note that the majority of variance reduction methods are nested, while PGD and PSGD are single-loop methods.

However, based on the proposed $p_t-\alpha_t$ correspondence, we can achieve this trade-off under the unified framework of Algorithm \ref{acceleration framework}, by changing the growth rate of the $\{\alpha_t \}_{t\in\mathbb{N}}$ sequence. Concretely, when the acceleration strength (determined by $\alpha$) increases, the convergence rate $\mathcal{O}(1/T^{\alpha+1})$ becomes faster, and the per-iteration cost $\mathcal{O}(b+n/(b t^{1-\alpha})+n/t)$ increases at the same time.

As mentioned above, less aggressive momentum acceleration (i.e., smaller $\alpha$) accompanies with smaller checkpoint update probability (i.e., smaller $p_t$), hence slower convergence rate can be compensated with lower per-iteration IFO cost in Algorithm \ref{acceleration framework}, showing a novel continuous trade-off between the convergence rate and the per-iteration cost. The key parameter $\alpha\in [0,1]$ is thus a weight that controls the proposed acceleration-variance reduction trade-off, through which $\alpha$ essentially controls the trade-off between the convergence rate and the per-iteration cost. Two favourable consequences of this mechanism are listed as follows.

First, Algorithm \ref{acceleration framework} recovers some existing IFO complexity of very different algorithms under a unified framework. Consider the $b=1$ case. When $\alpha=0$ (i.e., $\alpha_t\equiv 6$ for all $t\in\mathbb{N}$), the complexity of Algorithm \ref{acceleration framework} is $\mathcal{O}(n\log(1/\epsilon)+1/\epsilon)$, recovering the complexity of SVRG\textsuperscript{++} while removing some elusive requirements concerning the knowledge of the solution $x^*$ needed by SVRG\textsuperscript{++}. When $\alpha=1$ (i.e., $\alpha_t=\mathcal{O}(t)$), the complexity of Algorithm \ref{acceleration framework} is $\mathcal{O}(n+n/\sqrt{\epsilon})$, recovering the complexity of FISTA. Note that SVRG\textsuperscript{++} is a nested algorithm, uses the SVRG estimator, and employs the doubling technique, while FISTA is a single-loop algorithm, uses the full negative gradient as the iteration direction, and employs the Nesterov acceleration technique.

Second, we discover a series of novel complexity due to the fact that $\alpha\in [0,1]$ and can vary continuously. Consider the scenario where $n$ in (\ref{problem setting}) is very large or the computational resources are shorted, then the mini-batch size $b$ may be restricted to a small number. We show that when the target accuracy $\epsilon$ is prefixed and known, Algorithm \ref{acceleration framework} can achieve near-optimal complexity by choosing suitable $\alpha$ according to the knowledge of $n$ and $\epsilon$, under the restricted $b$ setting and regardless of the magnitude of $\epsilon$.

Consider the $b=1$ case for Algorithm \ref{acceleration framework}. When $n=10^4, \epsilon=10^{-12}$, the complexity of SVRG\textsuperscript{++}, FISTA, Algorithm \ref{acceleration framework} ($\alpha=1/2$), and the lower bound $\Omega(n+\sqrt{n}/\sqrt{\epsilon})$ are $\mathcal{O}(10^{12})$, $\mathcal{O}(10^{10})$, $\mathcal{O}(10^8)$ and $\mathcal{O}(10^8)$, respectively. In fact, given $n$ and $\epsilon\in (0,1)$ satisfying
\begin{equation}
\label{epsilon needed, condition}
    n< 1/\epsilon,
\end{equation}
for prefixed constants $C_1\in \left[1,+\infty\right)$ and $C_2\in \left[1,+\infty\right)$ (e.g., $C_1=C_2=2$), define\footnote{We require that the choice of $C_2$ makes $\Delta_2$ strictly positive. Under the condition (\ref{epsilon needed, condition}), this requirement can always be satisfied by choosing $C_2$ sufficiently close to $1$.}
\begin{equation*}
     \Delta=[\Delta_1,\Delta_2]=\left[\frac{1-\frac{\log(n)}{\log(1/\epsilon)}-2\frac{\log(C_1)}{\log(1/\epsilon)}}{1+\frac{\log(n)}{\log(1/\epsilon)}+2\frac{\log(C_1)}{\log(1/\epsilon)}}, \frac{1-\frac{\log(n)}{\log(1/\epsilon)}+2\frac{\log(C_2)}{\log(1/\epsilon)}}{1+\frac{\log(n)}{\log(1/\epsilon)}-2\frac{\log(C_2)}{\log(1/\epsilon)}}\right],
\end{equation*}
one can always choose $\alpha\in [0,1]$ satisfying
\begin{equation}
\label{epsilon needed, alpha choice}
    \alpha\in \begin{cases}
    \left[\max\left\{0,\Delta_1\right\},\Delta_2 \right]&\quad  \text{if} \ \Delta_2\leq \min\left\{\hat{\alpha},1/10 \right\}, \\
    \left[\max\left\{\min\left\{\hat{\alpha},1/10 \right\},\Delta_1\right\},\min\left\{1,\Delta_2\right\}\right] &\quad \text{if} \ \Delta_2>\min\left\{\hat{\alpha},1/10 \right\},
    \end{cases}
\end{equation}
then direct calculation gives
\begin{equation*}
    \frac{1}{\epsilon^{1/(\alpha+1)}}\leq C_1\frac{\sqrt{n}}{\sqrt{\epsilon}}, \ \frac{n}{\epsilon^{\alpha/(\alpha+1)}}\leq C_2\frac{\sqrt{n}}{\sqrt{\epsilon}}.
\end{equation*}
Hence the complexity in expectation of Algorithm \ref{acceleration framework} matches the theoretical lower bound up to a logarithmic factor under the $\alpha$ setting (\ref{epsilon needed, alpha choice}). This observation can be extended and summarised in the following corollary.

\begin{corollary}
\label{corollary, epsilon needed}
Under the setting and assumptions of Theorem \ref{theorem, complexity}, when the condition (\ref{epsilon needed, condition}) holds, by choosing $b=1$ and $\alpha$ satisfying (\ref{epsilon needed, alpha choice}), the complexity in expectation of Algorithm \ref{acceleration framework} is $\widetilde{\mathcal{O}}(n+\sqrt{n}/\sqrt{\epsilon})$. When the condition (\ref{epsilon needed, condition}) does not hold, by choosing $b=1$ and $\alpha=0$, the complexity of Algorithm \ref{acceleration framework} is $\Tilde{\mathcal{O}}(n)$. Hence, Algorithm \ref{acceleration framework} achieves near-optimal complexity in both cases under the restricted $b=1$ setting.
\end{corollary}
\begin{proof}
Since $\Delta_1<\Delta_2$ and $\Delta_2>0$ under the condition (\ref{epsilon needed, condition}) and the appropriate $C_2$ choice described in the footnote, the interval $\Delta$ is well-defined, and the intervals in (\ref{epsilon needed, alpha choice}) are well-defined and contained in $[0,1]$. Direct calculation shows that $\alpha\geq \Delta_1$ implies $1/\epsilon^{1/(\alpha+1)}\leq C_1\sqrt{n}/\sqrt{\epsilon}$ and $\alpha\leq \Delta_2$ implies $n/\epsilon^{\alpha/(\alpha+1)}\leq C_2\sqrt{n}/\sqrt{\epsilon}$. If $\Delta_2\leq \min\left\{\hat{\alpha},1/10\right\}$, under the first choice in (\ref{epsilon needed, alpha choice}), we have $\mathbb{E}C_{\text{comp}}$ takes the first form in (\ref{theorem 2, equation 0}) and $1/\epsilon^{1/(\alpha+1)}\leq \sqrt{n}/\sqrt{\epsilon}$, hence $\mathbb{E}C_{\text{comp}}=\widetilde{\mathcal{O}}(n+\sqrt{n}/\sqrt{\epsilon})$. Similarly, one sees that if $\Delta_2>\min\left\{\hat{\alpha},1/10 \right\}$, we have $\mathbb{E}C_{\text{comp}}=\widetilde{\mathcal{O}}(n+\sqrt{n}/\sqrt{\epsilon})$ under the second $\alpha$ choice in (\ref{epsilon needed, alpha choice}). In all, under the setting of this corollary, $\alpha$ satisfying (\ref{epsilon needed, alpha choice}) always exists and such choice of $\alpha$ yields the complexity $\widetilde{\mathcal{O}}(n+\sqrt{n}/\sqrt{\epsilon})$.

When $b=1$ and $\alpha=0$, by Theorem \ref{theorem, complexity}, the complexity of Algorithm \ref{acceleration framework} is $\mathcal{O}(1/\epsilon+n\log(1/\epsilon))$, which is also $\mathcal{O}(n+n\log(n))$ when $n\geq 1/\epsilon$. Under the condition $n\geq 1/\epsilon$, the first term in the lower bound $\Omega(n+\sqrt{n}/\sqrt{\epsilon})$ is dominant, hence the complexity of Algorithm \ref{acceleration framework} matches the lower bound up to a logarithmic factor in this case.
\end{proof}
For ease of discussion, we take $C_2=1$ in Corollary \ref{corollary, epsilon needed}, assume that $n$ in (\ref{problem setting}) is given and focus on the $n<1/\epsilon$ case. Some comments on Corollary \ref{corollary, epsilon needed} are listed as follows. First, we give intuitive explanations for the choice (\ref{epsilon needed, alpha choice}). The largest possible interval in (\ref{epsilon needed, alpha choice}) under the $\Delta_2\leq \min\left\{\hat{\alpha},1/10 \right\}$ case is $\left[0,\min\left\{\hat{\alpha},1/10\right\}\right]$, and the largest possible interval in (\ref{epsilon needed, alpha choice}) under the other case is $\left[\min\left\{\hat{\alpha},1/10\right\},1\right]$, which is separated from the former interval by $\min\left\{\hat{\alpha},1/10\right\}$. Since $n$ is given and $C_2=1$, $\Delta_2$ is smaller when $\log(n)/\log(1/\epsilon)$ is larger, i.e., $\epsilon$ is larger. Hence the selection rule (\ref{epsilon needed, alpha choice}) essentially suggests using mild acceleration and small checkpoint update probability when the target accuracy is low (i.e., $\epsilon$ is relatively large), and using aggressive acceleration and large checkpoint update probability when the target accuracy is high. 

To understand the strategy, one may begin by investigating Theorem \ref{theorem, complexity}. The convergence rate of $\mathbb{E}[F(w_{T})-F(x^*)]$ is $\mathcal{O}(1/T^{\alpha+1})$, hence it requires $\mathcal{O}(1/\epsilon^{1/(\alpha+1)})$ iterations to reach an $\epsilon$-accurate solution, which constitutes the first part of the complexity in (\ref{theorem 2, equation 0}) when neglecting the logarithmic factor. The second part of the complexity is $n/\epsilon^{\alpha/(\alpha+1)}$, which comes from computing full gradient at checkpoints. Note that as $\alpha$ increases, although the convergence rate improves and $1/\epsilon^{\alpha+1}$ decreases, but $n/\epsilon^{\alpha/(\alpha+1)}$ increases due to the increase of the checkpoint update probability $p_t$. Since $n/\epsilon^{\alpha/(\alpha+1)}$ contains a large factor $n$, it is desirable to have lower per-iteration cost in expectation (related to $p_t$) while maintaining the convergence rate not too slow.

The ideal trade-off between the convergence rate and the per-iteration cost depends on the relative scale between $n$ and $1/\epsilon$. In the low accuracy regime where $\epsilon$ is large, the algorithm should have relatively low per-iteration cost and can tolerate relatively slow convergence rate, and vise versa in the high accuracy regime where $\epsilon$ is small.

Second, Corollary \ref{corollary, epsilon needed} proves that more aggressive acceleration (i.e., larger $\alpha_t$) does not necessarily lead to better complexity in Algorithm \ref{acceleration framework} under the mini-batch size constraint $b=1$, while it is the trade-off between acceleration and variance reduction that actually matters. Since we take $C_2=1$, then $\Delta_2<1$ under the condition (\ref{epsilon needed, condition}), which means that the largest allowable acceleration $\alpha_t=\mathcal{O}(t)$ (which corresponds to $\alpha=1$) can not provide best achievable complexity in either cases of (\ref{epsilon needed, alpha choice})\footnote{Note that $C_2=1$ is a desirable mild constant factor in the complexity. One can also easily check that $\Delta_2<1$ for a wide range of $\epsilon$ when $C_2>1$ takes a mild constant value.}.

This phenomenon somehow contradicts with the conventional choice in acceleration methods for solving noise-absent problems. Typically, since the convergence rate of an acceleration method is $1/\alpha_T^2$ where $T$ is the total iteration number, larger $\alpha_t$ setting is favourable as long as it is allowable. 

In stochastic optimization problems where the gradient oracle contains stochastic noise, large momentum parameters may fail since the complexity explodes or the convergence fails. It is in such cases that the momentum parameter is then reduced in order to adapt to the inexactness of the gradient estimator \cite{gupta2024nesterov}. For another example, when the gradient estimate satisfies the absolute error condition, the convergence bound of accelerated methods consists of the vanishing term and the accumulated error \cite{liu2025nonasymptoticanalysisacceleratedmethods}. Since more aggressive acceleration decreases the vanishing term while increases the accumulated error, the acceleration strength is tuned to achieve the ideal trade-off in the finite-horizon setting where the iteration number is prefixed.

In contrast, in the considered noise-absent finite-sum problem, all $\alpha\in [0,1]$ settings are allowable in Algorithm \ref{acceleration framework} and there is no accumulated error term, but we actively choose to reduce the acceleration strength and successfully get compensated by the reduction of the per-iteration cost in expectation. The analysis shows that in Algorithm \ref{acceleration framework}, acceleration comes with implicit price of heavier variance reduction (achieved by more per-iteration cost), hence the trade-off should be considered in certain scenarios even when the gradient oracle is noise-free.

Finally, Algorithm \ref{acceleration framework} achieves the near-optimal complexity in expectation under standard assumptions, which significantly improves upon the previous best $\mathcal{O}(n+n/\sqrt{\epsilon})$ complexity of single-loop VR methods \cite{driggs2022accelerating} and matches the lower bound $\Omega(n+\sqrt{n}/\sqrt{\epsilon})$ up to a logarithmic factor.

\begin{corollary}
\label{corollary, epsilon not needed}
Under the setting of Theorem \ref{theorem, complexity}, setting $\alpha=1$ and $b=\mathcal{O}(\sqrt{n})$, then
\begin{equation*}
    \lim_{t\rightarrow \infty}\mathbb{E}\left[F(y_t)-F(x^*)\right]=0, \ \lim_{t\rightarrow \infty}\mathbb{E}\left[F(w_t)-F(x^*)\right]=0,
\end{equation*}
the complexity in expectation of Algorithm \ref{acceleration framework} (for both sequences $\{\mathbb{E}[F(y_t)]\}_{t}$ and $\{\mathbb{E}[F(w_t)]\}$) is $\widetilde{\mathcal{O}}(n+\sqrt{n}/\sqrt{\epsilon})$.
\end{corollary}

\section{Numerical Experiments}
\label{experiments}

The results of some numerical experiments are given to demonstrate the efficiency of the proposed method and validate other theoretical findings. We consider the following $l_1$-regularized logistic regression problem
\begin{equation}
\label{regularized logistic regression problem}
    \mathop{\rm{min}}_{x\in \mathbb{R}^d} \quad F(x)=\frac{1}{n}\sum_{i=1}^n \log [1+\exp(-b_ia_i^Tx)]+\frac{\lambda}{2}\Vert x\Vert_{1},
\end{equation}
where $a_i\in \mathbb{R}^d$ and $b_i\in \{-1,1 \}$ are the feature vector and class label of the $i$th sample, respectively, and $\lambda>0$ is a weighting parameter. The adopted binary classification datasets \textit{ijcnn1, w8a, rcv1} are downloaded from the  LIBSVM\footnote{Online available at \url{https://www.csie.ntu.edu.tw/\string~ cjlin/libsvmtools/datasets/.}} database. All the tested algorithms are run $10$ times, and the best performance of these algorithms is reported.

\subsection{Comparison with Other Methods}

In this subsection, we compare Algorithm \ref{acceleration framework} with FISTA, Katyusha (for non-strongly convex problems) and VRADA for solving (\ref{regularized logistic regression problem}), where $\lambda=10^{-4}$. The Euclidean norm of each data vector in the datasets are normalized to be $1$, and hence $0.25$ is an upper bound of the smoothness parameter $L$ of the smooth part of (\ref{regularized logistic regression problem}).

For the four algorithms we compare, the common parameter to tune is the parameter with respect to (w.r.t.) the Lipschitz constant, which is tuned in $\{0.0125,0.025,0.05,0.1,0.25,0.5 \}$. The inner loop length $m$ of Katyusha and VRADA are set as $m=2n$, as suggested by \cite{allen2018katyusha, song2020variance}. For Algorithm \ref{acceleration framework}, we set $\alpha=1$ and the mini-batch size $b$ as $b=\lceil 1/\sqrt{n}\rceil$, as required in Corollary \ref{corollary, epsilon not needed}. Other parameters of Algorithm \ref{acceleration framework} are set strictly according to the theoretical requirements, where $\Tilde{\alpha}_0$ is set as $\Tilde{\alpha}_0=\max\{\xi\alpha_1^2,36 \}$ which satisfies the theoretical requirement in Lemma \ref{lemma, well-definedness} and prevents frequent computation of full gradient at the beginning of the optimization process.

The comparison results of tested algorithms are shown in Fig. \ref{fig:between-comparison}. In Fig. \ref{fig:between-comparison}, the $x$-axis denotes the number of full gradient evaluations, the $y$-axis denotes the objective function value at the checkpoints of the VR methods (and the iterates of the FISTA method).

It can be seen from Fig. \ref{fig:between-comparison} that, Algorithm \ref{acceleration framework} converges to a moderate accuracy faster than other tested algorithms, saving a substantial amount of full gradient evaluations compared to FISTA, Katyusha, and VRADA, in the sense that the ratio between required epochs of Algorithm \ref{acceleration framework} and that of other methods is small. This feature is favourable in certain scenarios, e.g., in large-scale machine learning tasks where a solution of moderate accuracy is required.

\begin{figure}

\begin{subfigure}[h]{0.32\textwidth}
\includegraphics[width=0.95\linewidth]{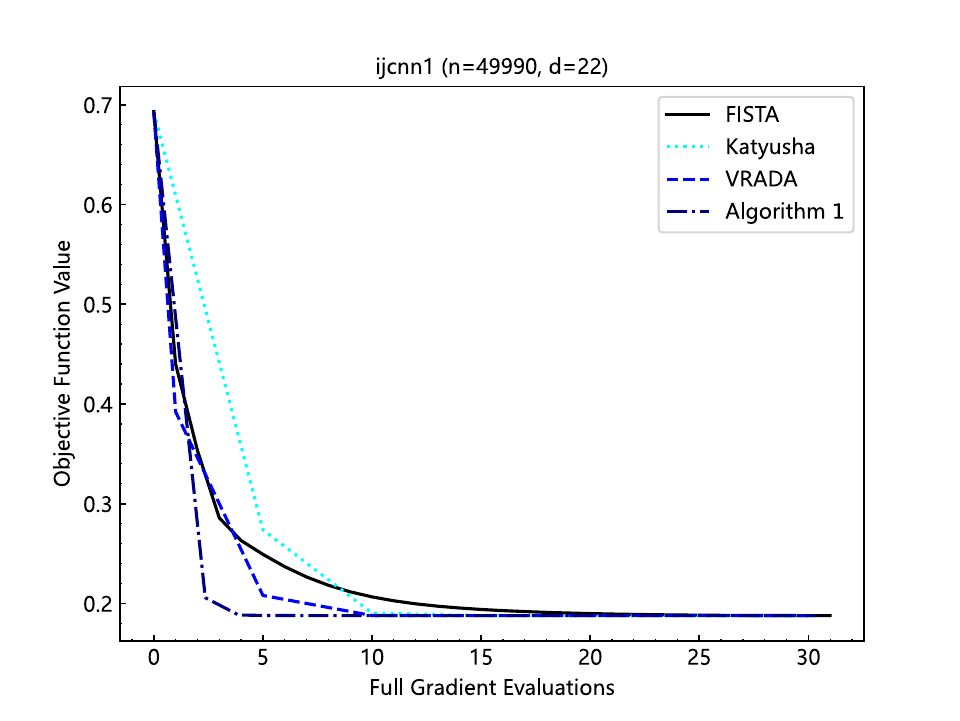}
\caption{ijcnn1}
\label{fig:comparison_ijcnn1}
\end{subfigure}
\begin{subfigure}[h]{0.32\textwidth}
\includegraphics[width=0.95\linewidth]{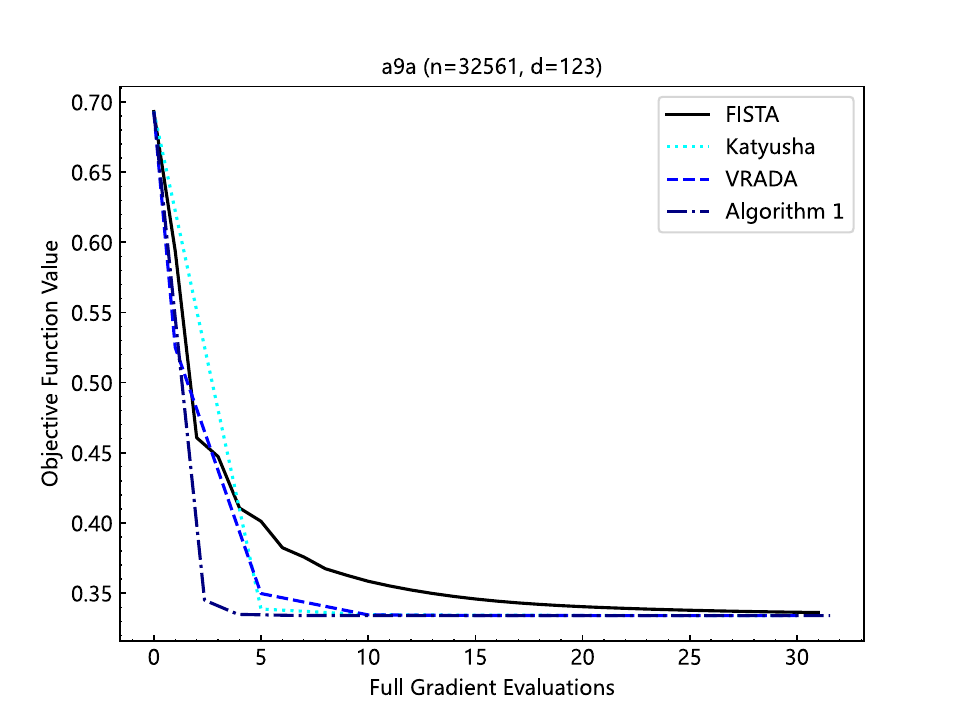}
\caption{a9a}
\label{fig:comparison_w8a}
\end{subfigure}
\begin{subfigure}[h]{0.32\textwidth}
\includegraphics[width=0.95\linewidth]{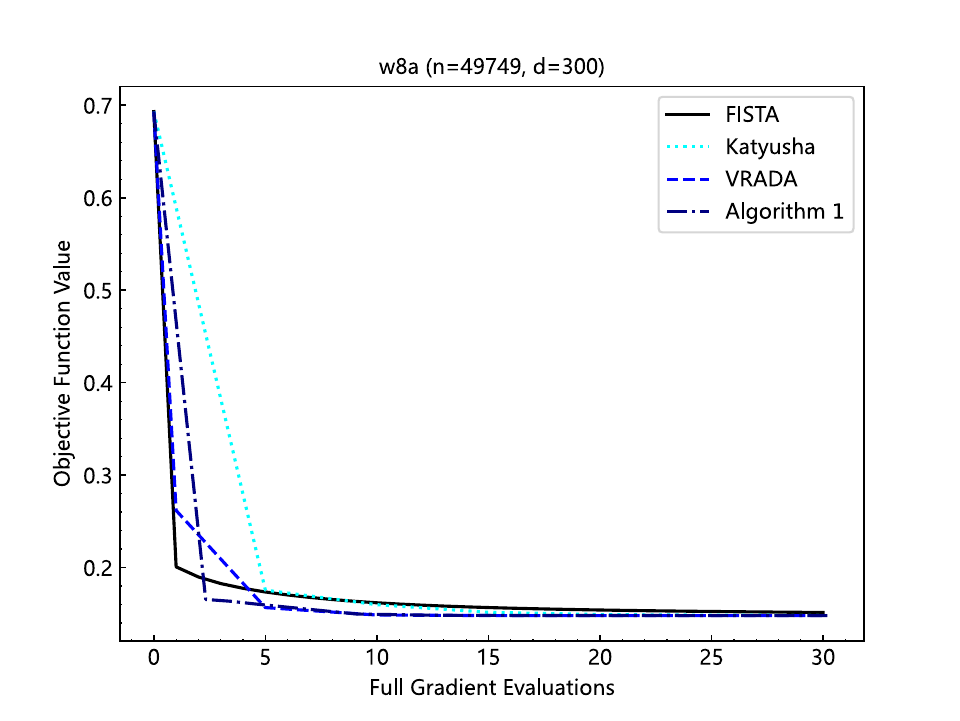}
\caption{w8a}
\label{fig:comparison_rcv1}
\end{subfigure}

\caption{Performance comparison for solving regularized logistic regression among different algorithms}
\label{fig:between-comparison}
\end{figure}

\subsection{Empirical Investigation of the Acceleration-Variance Reduction Trade-Off}

In this subsection, we investigate the proposed acceleration-variance reduction trade-off numerically. Algorithm \ref{acceleration framework} with different $\alpha$ settings are used for solving (\ref{regularized logistic regression problem}), where $\lambda=10^{-4}$. The Euclidean norm of each data vector in the datasets are normalized to be $1$, and hence $0.25$ is an upper bound of the smoothness parameter $L$ of the smooth part of (\ref{regularized logistic regression problem}).

For Algorithm \ref{acceleration framework} with different $\alpha$ settings, we use the Lipschitz constant estimate $0.25$ and set the mini-batch size $b$ as $b=1$. Other parameters are set the same and strictly satisfy the theoretical requirements of Corollary \ref{corollary, epsilon needed} to highlight and investigate the effect of the key parameter $\alpha$.

The comparison results are shown in Fig. \ref{fig:self-comparison}. In Fig. \ref{fig:self-comparison}, the $x$-axis denotes the number of full gradient evaluations, the $y$-axis denotes the gap between the objective function value at the checkpoint and the approximate minimum of the objective function value. The intersections on the $x$-axis indicate required full gradient evaluations to reach an $\epsilon$-accurate solution, where $\epsilon$ is the prefixed target accuracy.

In Fig. (\ref{fig:self_comparison_low_accuracy_ijcnn1}), (\ref{fig:self_comparison_low_accuracy_a9a}), and (\ref{fig:self_comparison_low_accuracy_w8a}), the target accuracy is $\epsilon=1/\sqrt{n}$ where $n$ is the number of samples of the dataset. In this low-accuracy case, the optimal $\alpha$ value suggested in Corollary \ref{corollary, epsilon needed} is $\alpha=0$. In Fig. (\ref{fig:self_comparison_high_accuracy_ijcnn1}), (\ref{fig:self_comparison_high_accuracy_a9a}), and (\ref{fig:self_comparison_high_accuracy_w8a}), the target accuracy is $\epsilon=1/(5n)$. In this case, the $\alpha$ is set as
\begin{equation*}
    \alpha=\alpha^{*}=\frac{1-\frac{\log(n)}{\log(1/\epsilon)}}{1+\frac{\log(n)}{\log(1/\epsilon)}},
\end{equation*}
which satisfies the theoretical requirement of Corollary \ref{corollary, epsilon needed}.

It can be seen from Fig. \ref{fig:self-comparison} that, Algorithm \ref{acceleration framework} with theoretically suggested (small) $\alpha$ setting significantly outperforms Algorithm \ref{acceleration framework} with large $\alpha$ setting. Algorithm \ref{acceleration framework} with large $\alpha$ setting, i.e., aggressive acceleration strength, requires relatively frequent full gradient computation. This heavier variance reduction, although allows more aggressive acceleration, ultimately leads to inferior empirical performance for the given accuracy, which validates our theoretical results in Corollary \ref{corollary, epsilon needed}. It also follows that it is beneficial to consider the proposed acceleration-variance reduction trade-off rather than acceleration alone in certain scenarios.

\begin{figure}

\begin{subfigure}[h]{0.32\textwidth}
\includegraphics[width=0.95\linewidth]{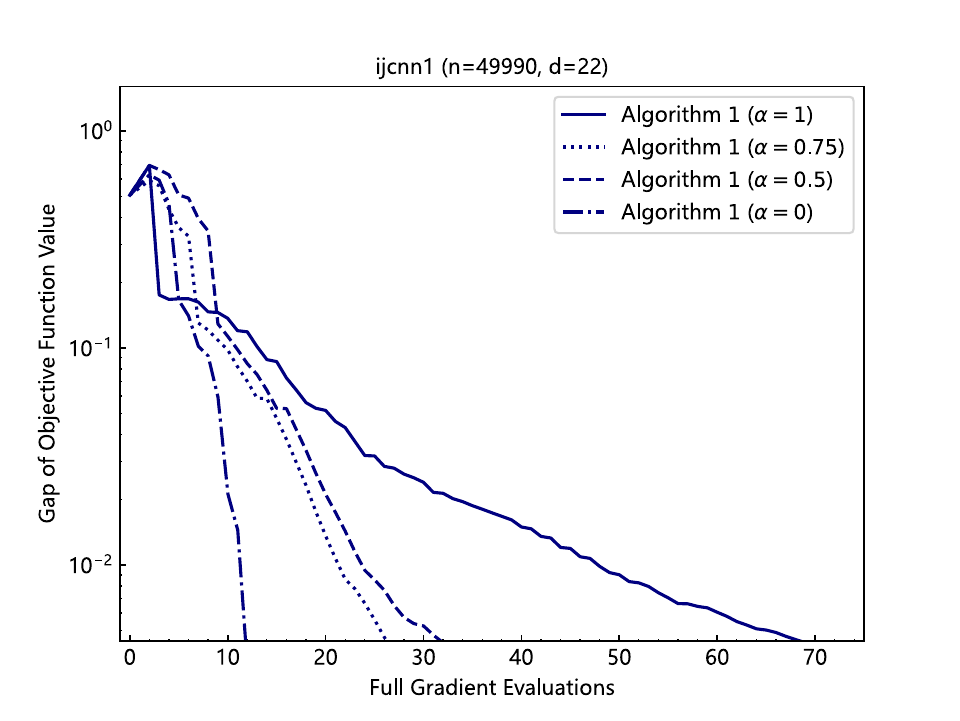}
\caption{ijcnn1}
\label{fig:self_comparison_low_accuracy_ijcnn1}
\end{subfigure}
\begin{subfigure}[h]{0.32\textwidth}
\includegraphics[width=0.95\linewidth]{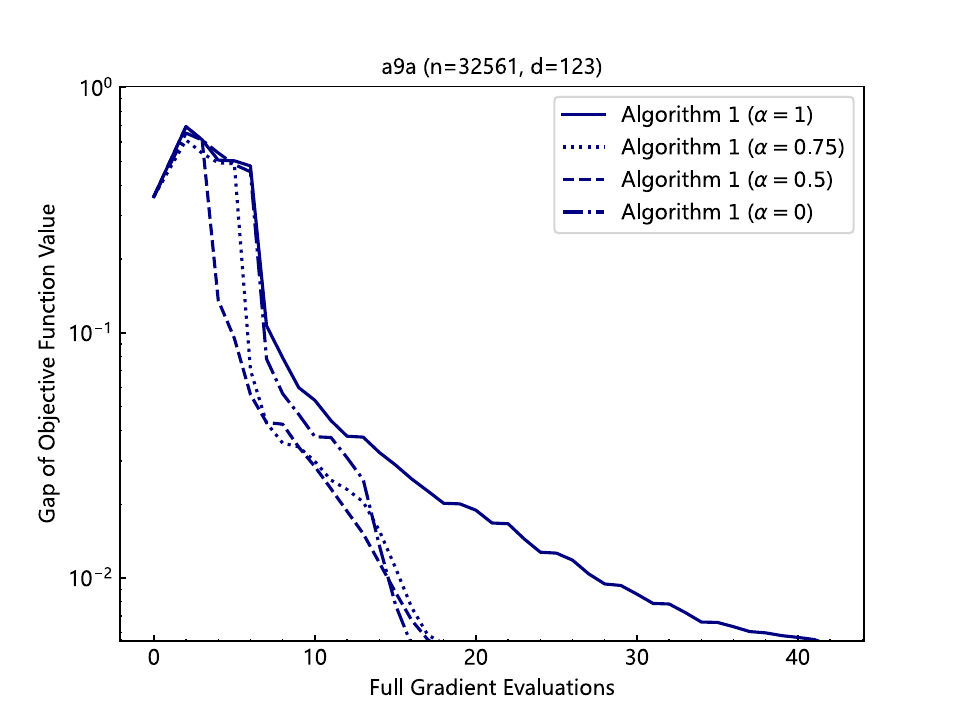}
\caption{a9a}
\label{fig:self_comparison_low_accuracy_a9a}
\end{subfigure}
\begin{subfigure}[h]{0.32\textwidth}
\includegraphics[width=0.95\linewidth]{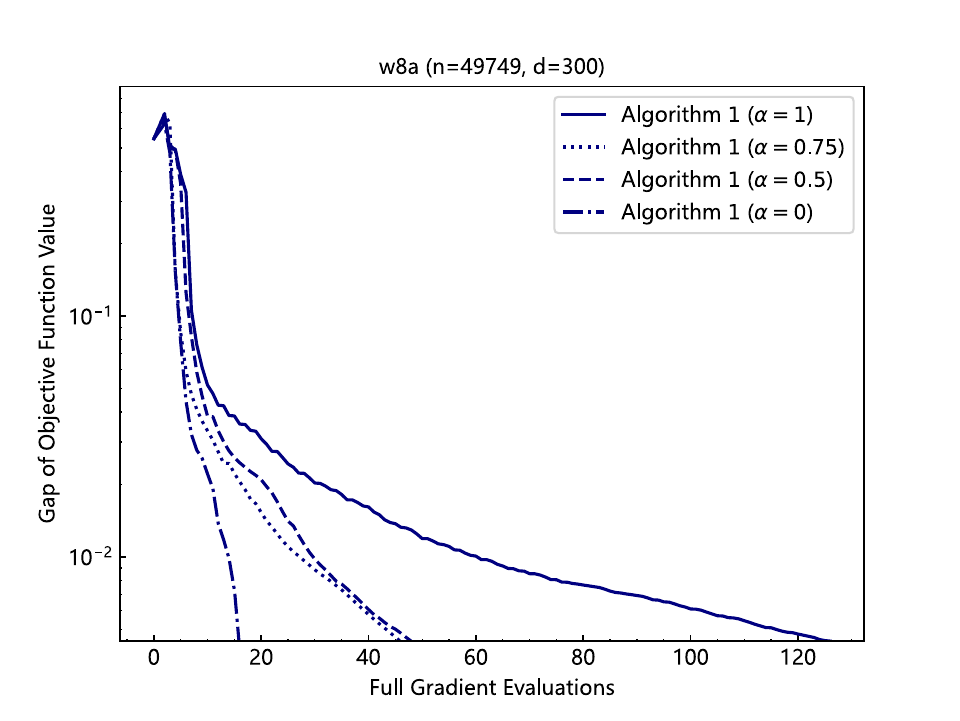}
\caption{w8a}
\label{fig:self_comparison_low_accuracy_w8a}
\end{subfigure}
\begin{subfigure}[h]{0.32\textwidth}
\includegraphics[width=0.95\linewidth]{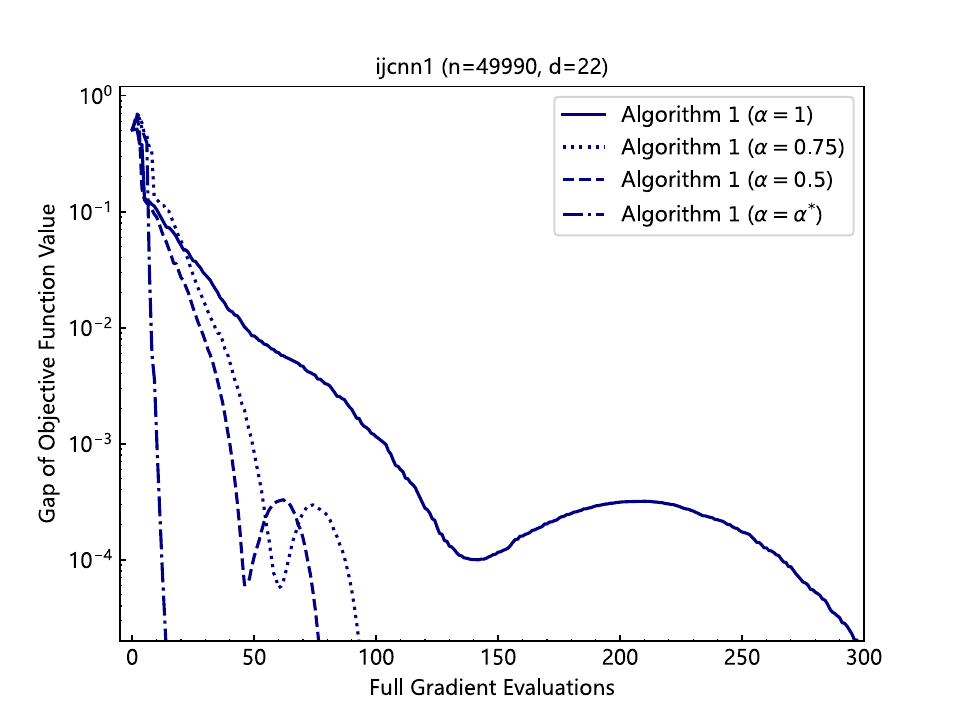}
\caption{ijcnn1}
\label{fig:self_comparison_high_accuracy_ijcnn1}
\end{subfigure}
\begin{subfigure}[h]{0.32\textwidth}
\includegraphics[width=0.95\linewidth]{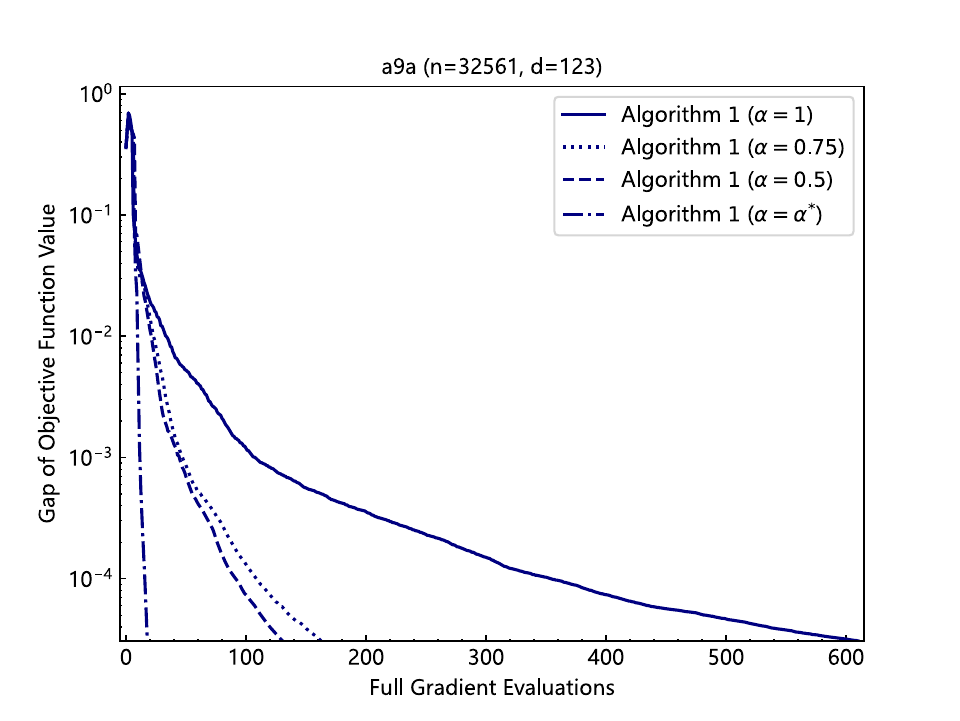}
\caption{a9a}
\label{fig:self_comparison_high_accuracy_a9a}
\end{subfigure}
\begin{subfigure}[h]{0.32\textwidth}
\includegraphics[width=0.95\linewidth]{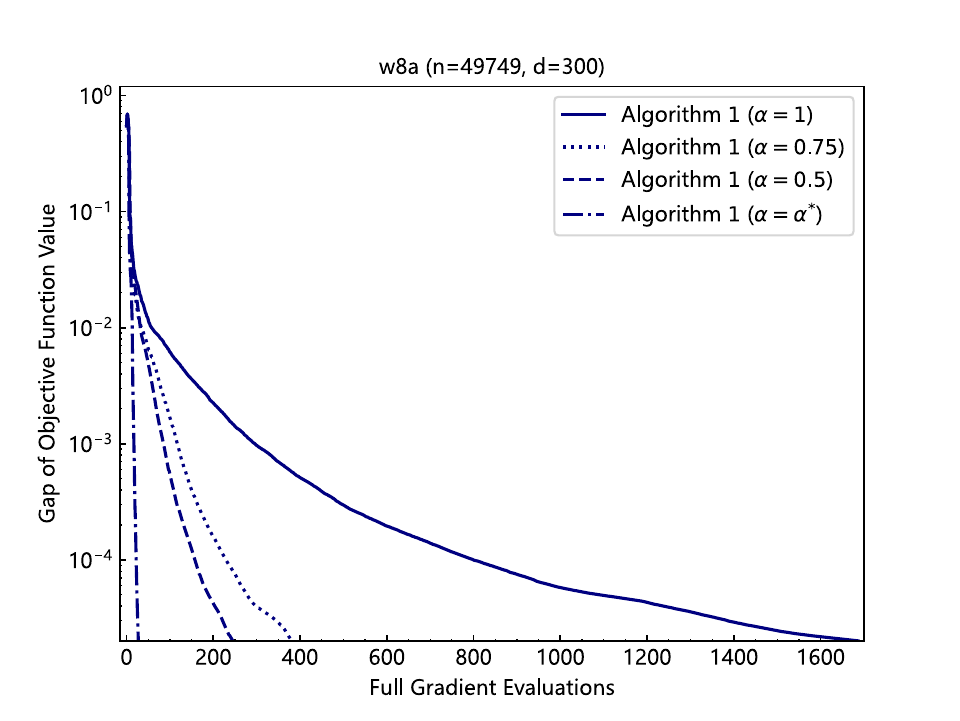}
\caption{w8a}
\label{fig:self_comparison_high_accuracy_w8a}
\end{subfigure}

\caption{Performance comparison of Algorithm $1$ under different $\alpha$ settings}
\label{fig:self-comparison}
\end{figure}

\section{Conclusion}
\label{conclusions}

The trade-off between convergence rate and per-iteration cost, which essentially determine the complexity of an algorithm, is important in (structured) finite-sum optimization. Handling such trade-off typically involves the usage of different algorithmic structure, iteration direction, and other optimization technique. In this work, we propose a unified single-loop variance-reduced acceleration framework. By relating checkpoint update probabilities to momentum parameters and alternating the growth rate of the momentum parameters, the proposed simple framework exhibits a continuous trade-off between convergence rate and per-iteration cost, obtaining a series of new complexity and recovering some known complexity of distinct existing methods as special cases.

The proposed framework allows us to explore the trade-off between acceleration and variance reduction, which is key to achieve the aforementioned rate-cost trade-off and is itself a novel topic in noise-absent finite-sum optimization. The analysis shows that in the proposed framework, acceleration comes with implicit price of heavier variance reduction (achieved by increasing per-iteration cost), which renders the largest allowable acceleration strength not necessarily optimal and highlights the introduced acceleration-variance reduction trade-off in certain scenarios.

The proposed framework also leads to complexity improvement in the family of single-loop variance reduction methods. Under standard assumptions, our method achieves the near-optimal complexity $\Tilde{\mathcal{O}}(n+\sqrt{n}/\sqrt{\epsilon})$, which significantly improves upon previous best complexity $\mathcal{O}(n+n/\sqrt{\epsilon})$ in this family of methods for solving the composite general convex problem.

\clearpage

\appendix

\section{Proof of Lemma \ref{lemma, well-definedness}}
\label{proof of lemma 1}

We begin by proving the following key inequality:
\begin{equation}
\label{lemma 1, equation step 1}
    \xi\left(\alpha_{t+1}^2-\alpha_t^2 \right)\leq \alpha_{t-1}^2-\alpha_t^2+\alpha_t, \ \forall t\in\mathbb{N}^{+}.
\end{equation}
When $\alpha=0$, all $\alpha_t$ are equal and positive, hence (\ref{lemma 1, equation step 1}) obviously holds. The cases where $\alpha\neq 0$ are considered as follows. Consider the case where $t\geq 18$, so that $\alpha_{t-1}, \alpha_t, \alpha_{t+1}$ take the form $a_{\alpha}(t-1)^{\alpha}, a_{\alpha}t^{\alpha}$, and $a_{\alpha}(t+1)^{\alpha}$, respectively. In this case, it suffices to prove
\begin{equation}
\label{lemma 1, equation step 2}
    \alpha_{t+1}^2\leq \alpha_t+\alpha_{t-1}^2, \ \forall t\geq 18,
\end{equation}
since by (\ref{lemma 1, equation step 2}), we can deduce that
\begin{equation*}
    \xi\left(\alpha_{t+1}^2-\alpha_t^2 \right)\leq \alpha_{t+1}^2-\alpha_t^2\leq \alpha_t-\alpha_t^2+\alpha_{t-1}^2,
\end{equation*}
where the first inequality follows from $\alpha_{t+1} \geq \alpha_t$ and the setting $\xi=1/(bc)$ where $b\in [n], c\geq 2$. Define
\begin{equation*}
    g(t)=a_{\alpha}t^{\alpha}+a_{\alpha}^2(t-1)^{2\alpha}-a_{\alpha}^2 (t+1)^{2\alpha},
\end{equation*}
then inequality (\ref{lemma 1, equation step 2}) is equivalent to $g(t)\geq 0$ for $t\geq 18$, which will be proved by monotonicity argument. The simple case $\alpha=1$ will be discussed separately. We prove $g(17)\geq 0$ for $\alpha\in \left(0,1\right)$ as follows. In the $\alpha\in \left(0,1/2\right]$ case, note that
\begin{align*}
    g(17)&= a_{\alpha}\left[17^{\alpha}+a_{\alpha}\left(16^{2\alpha}-18^{2\alpha}\right) \right] \\
    &=a_{\alpha}\left[17^{\alpha}+a_{\alpha}\left(16^{\alpha}-18^{\alpha} \right)\left(16^{\alpha}+18^{\alpha} \right) \right] \\
    &\geq a_{\alpha}\left(16^{\alpha}+18^{\alpha} \right)\left[a_{\alpha}\left(16^{\alpha}-18^{\alpha} \right)+\frac{1}{2} \right] \\
    &\geq \frac{4+\sqrt{2}}{4}(16^{\alpha}+18^{\alpha})\left[\frac{4+\sqrt{2}}{4}\left(\sqrt{16}-\sqrt{18}\right)+\frac{1}{2} \right],
\end{align*}
where in the first inequality the concavity inequality $17^{\alpha}\geq (1/2)\left(16^{\alpha}+18^{\alpha} \right)$ is used, and the second inequality is by the fact that $r_1^{\alpha}-r_2^{\alpha}$ is decreasing w.r.t. $\alpha$, where $1<r_1<r_2$ (this can be shown by taking derivative w.r.t. $\alpha$). Simple calculation shows that $g(17)\geq 0$ in this case. Similarly, in the $\alpha\in \left(1/2,3/4\right]$ case, 
\begin{equation*}
    g(17)\geq \frac{1}{3}\left(16^{\alpha}+18^{\alpha} \right)\left[\frac{1}{3}\left(16^{3/4}-18^{3/4} \right)+\frac{1}{2} \right]\geq 0.
\end{equation*}
In the $\alpha\in\left(3/4,1\right)$ case, 
\begin{equation*}
    g(17)\geq \frac{1}{4}\left(\frac{17}{16}\right)^{\alpha-1}\left(16^{\alpha}+18^{\alpha} \right)\left[\frac{4}{17}\left(\frac{17}{16}\right)^{\alpha}\left(16^{\alpha}-18^{\alpha}\right)+\frac{1}{2} \right].
\end{equation*}
Note that the quantity $17^{\alpha}-\left(308/16\right)^{\alpha}$ has the infimum $-17/8$ on the interval $\left(3/4,1\right)$. This shows that $g(17)\geq 0$. Define 
\begin{equation*}
    g_1(t)=t^{\alpha-1}+2a_{\alpha}(t-1)^{2\alpha-1}-2a_{\alpha}(t+1)^{2\alpha-1},
\end{equation*}
then $g^{\prime}(t)=a_{\alpha}\alpha g_1(t)$ and it suffices to prove $g_1(t)\geq 0$ for $t\geq 17$ to complete the monotonicity argument. 

Consider the case where $\alpha\in(0,1)$. In this case, 
\begin{equation*}
    g_1(t)=t^{\alpha-1}\left( 1+2a_{\alpha}(t-1)^{\alpha}(1-1/t)^{\alpha-1}-2a_{\alpha}(t+1)^{\alpha}(1+1/t)^{\alpha-1}\right).
\end{equation*}
To prove $g_1(t)\geq 0$, it suffices to prove
\begin{equation*}
    \frac{1}{2a_{\alpha}}\geq (t+1)^{\alpha}\left(1+1/t \right)^{\alpha-1}-(t-1)^{\alpha}(1-1/t)^{\alpha-1}.
\end{equation*}
Note that $(1+1/t)^{\alpha-1}\leq (1-1/t)^{\alpha-1}$, it suffices to prove
\begin{equation*}
    1/(2a_{\alpha})\geq (1-1/t)^{\alpha-1}[(t+1)^{\alpha}-(t-1)^{\alpha}].
\end{equation*}
Since $(1-1/t)^{\alpha-1}$ is decreasing w.r.t. $t$, hence it is upper bounded by $(16/17)^{\alpha-1}$ for $t\geq 17$. Define $q_{t}(\alpha)=(t+1)^{\alpha}-(t-1)^{\alpha}$ where $t$ is regarded as a parameter. When $t\geq 17$, taking derivative w.r.t. $\alpha$ gives
\begin{equation*}
    q_t^{\prime}(\alpha)=(t+1)^{\alpha}\log{(t+1)}-(t-1)^{\alpha}\log{(t-1)}.
\end{equation*}
Since $(t+1)^{\alpha}\geq (t-1)^{\alpha}\geq 0$ and $\log{(t+1)}\geq \log{(t-1)}\geq 0$, we have $q_t^{\prime}(\alpha)\geq 0$ for any fixed $t\geq 17$. 

Consider the sub-case where $\alpha\in \left(0,1/2\right]$. Fix any $t\geq 17$, we have
\begin{equation*}
    q_t(\alpha)\leq \lim_{\alpha\rightarrow (1/2)^{-}}q_t(\alpha)=\sqrt{t+1}-\sqrt{t-1}\leq \frac{1}{\sqrt{t-1}}\leq\frac{1}{4}.
\end{equation*}
Hence 
\begin{equation*}
    \left(1-1/t \right)^{\alpha-1}\left[(t+1)^{\alpha}-(t-1)^{\alpha} \right]\leq (16/17)^{\alpha-1}\cdot (1/4)\leq 1/(2a_{\alpha}),
\end{equation*}
where the second inequality is by our choice of $a_{\alpha}=(4+\sqrt{2})/4$, and the inequality desired in this sub-case is proved.

Consider the sub-case where $\alpha\in \left(1/2,3/4\right]$. Fix any $t\geq 17$, we have
\begin{equation*}
    q_t(\alpha)\leq \lim_{\alpha\rightarrow (3/4)^{-}} q_t(\alpha)=(t+1)^{3/4}-(t-1)^{3/4}\leq \frac{3}{4}.
\end{equation*}
Hence
\begin{equation*}
    \left(1-1/t \right)^{\alpha-1}\left[(t+1)^{\alpha}-(t-1)^{\alpha} \right]\leq (16/17)^{\alpha-1}\cdot (3/4)\leq 1/(2a_{\alpha}),
\end{equation*}
where the second inequality is by our choice of $a_{\alpha}=1/3$.

Consider the sub-case where $\alpha\in \left(3/4,1\right)$. Fix any $t\geq 17$, we have $q_t(\alpha)\leq \lim_{\alpha\rightarrow 1^{-}} q_t(\alpha)=2$, so $q_t(\alpha)\leq 2$ for $t\geq 17$. Hence
\begin{equation*}
    (1-1/t)^{\alpha-1}\left[(t+1)^{\alpha}-(t-1)^{\alpha} \right]\leq (16/17)^{\alpha-1}\cdot 2\leq 1/(2a_{\alpha}),
\end{equation*}
where the second inequality is by our choice of $a_{\alpha}=(1/4)\cdot(17/16)^{\alpha-1}$.

Consider the case where $\alpha=1$. In this case, $g_1(t)=1-4a_{\alpha}\geq 0$ since $a_{\alpha}|_{\alpha=1}=1/4$.

Altogether, by monotonicity, we have $g(t)\geq 0$ for $t\geq 18, \alpha\in \left(0,1\right]$. Thus (\ref{lemma 1, equation step 2}) is proved for all $\alpha\in \left(0,1\right]$, which implies that (\ref{lemma 1, equation step 1}) holds for all $t\geq 18$, $\alpha\in \left(0,1\right]$. We then prove (\ref{lemma 1, equation step 1}) for $1\leq t\leq 17$ under all $\alpha\in\left(0,1\right]$ settings (except for $\alpha=0$). When $t=17$, since $\alpha_{18}^2\geq \alpha_{17}^2$ for $\alpha\in \left(0,1\right]$, by the reasoning at the beginning of this proof, it suffices to prove $\alpha_{18}^2\leq \alpha_{17}+\alpha_{16}^2$. In the $\alpha\in \left(0,1/2\right]$ sub-case, $\alpha_{18}^2=(1+\sqrt{2}/4)^2\cdot 18^{2\alpha}\leq 36=\alpha_{16}^2 $, in the $\alpha\in\left(1/2,3/4\right]$ sub-case, $\alpha_{18}^2=(1/9)\cdot 18^{2\alpha}\leq 36$, and in the $\alpha\in \left(3/4,1\right]$ sub-case, $\alpha_{18}^2=\left(1/16\right)\cdot \left(17/16\right)^{2\alpha-2}18^{2\alpha}\leq 36=\alpha_{16}^2$, this concludes the $t=17$ case for all $\alpha\in \left(0,1\right]$. When $t=16$, $\alpha_{17}^2-\alpha_{16}^2\leq \alpha_{18}^2-\alpha_{16}^2\leq 0$, while the RHS of (\ref{lemma 1, equation step 1}) equals $\alpha_{16}\geq 0$, hence the $t=16$ case is proved. Note that (\ref{lemma 1, equation step 1}) obviously holds for $1\leq t\leq 15$, since the left hand side (LHS) equals $0$ and the RHS is positive. 

Altogether, the key inequality (\ref{lemma 1, equation step 1}) is proved for all $\alpha\in[0,1]$, and its corollaries are listed as follows. First, it follows from (\ref{lemma 1, equation step 1}) that the following inequality holds:
\begin{equation}
\label{lemma 1, equation step 3}
    \alpha_{t-1}^2-\alpha_t^2+\alpha_t\geq 0, \ \forall t\in \mathbb{N}^{+}.
\end{equation}
Since when $t\in \mathbb{N}^{+}\setminus \{16\}$, we have $\alpha_{t+1}^2\geq \alpha_{t}^2$, by which one concludes $\alpha_{t-1}^2-\alpha_t^2+\alpha_t\geq 0$, where (\ref{lemma 1, equation step 1}) is used. And when $t=16$, $\alpha_{15}^2-\alpha_{16}^2+\alpha_{16}=\alpha_{16}\geq 0$. Second, it follows from (\ref{lemma 1, equation step 1}) that the following inequality holds:
\begin{equation}
\label{lemma 1, equation step 4}
    \Tilde{\alpha}_0+\alpha_0^2-\alpha_{t-1}^2+\sum_{j=1}^{t-1}\alpha_j \geq \xi \alpha_t^2\geq 0, \ \forall t\in\mathbb{N}^{+}.
\end{equation}
We show this by mathematical induction. When $t=1$, $\Tilde{\alpha}_0\geq \xi\alpha_1^2$ by the setting of $\Tilde{\alpha}_0$. Suppose (\ref{lemma 1, equation step 4}) holds for some $t\in\mathbb{N}^{+}$. Then
\begin{align*}
    \Tilde{\alpha}_0+\alpha_0^2-\alpha_t^2+\sum_{j=1}^t \alpha_j-\xi\alpha_{t+1}^2 &=\Tilde{\alpha}_0+\alpha_0^2-\alpha_{t-1}^2+\sum_{j=1}^{t-1}\alpha_j+\alpha_{t-1}^2-\alpha_t^2+\alpha_t-\xi \alpha_{t+1}^2 \\
    &\geq \xi\left(\alpha_t^2-\alpha_{t+1}^2 \right)+\alpha_{t-1}^2-\alpha_t^2+\alpha_t\geq 0,
\end{align*}
where in the first inequality we use the induction hypothesis and in the second inequality (\ref{lemma 1, equation step 1}) is used. By mathematical induction, (\ref{lemma 1, equation step 4}) is proved. Recalling the definition of the $\{p_t\}$ sequence in Algorithm \ref{acceleration framework}, $p_t$ can be reformulated as follows
\begin{equation}
\label{lemma 1, equation step 5}
    p_t=\frac{\alpha_{t-1}^2-\alpha_t^2+\alpha_t+\xi\alpha_t^2}{\alpha_{t-1}^2-\alpha_t^2+\alpha_t+\Tilde{\alpha}_0+\alpha_0^2-\alpha_{t-1}^2+\sum_{j=1}^{t-1}\alpha_j}.
\end{equation}
By (\ref{lemma 1, equation step 3}), (\ref{lemma 1, equation step 4}) and (\ref{lemma 1, equation step 5}), we have
\begin{equation}
\label{lemma 1, equation step 6}
    0\leq p_t\leq 1, \ \forall t\in\mathbb{N}^{+}.
\end{equation}
One easily checks that $\alpha_{17}>1$ for $\alpha=0, \alpha\in\left(0,1/2\right], \alpha\in \left(1/2,3/4\right]$, and $\alpha\in\left(3/4,1\right]$, respectively. In fact, $\alpha_{17}$ allows a uniform deterministic constant lower bound for all $\alpha\in[0,1]$. Specifically, $\alpha_{17}=6$ for $\alpha=0$, $\alpha_{17}\geq 1+\sqrt{2}/4$ for $\alpha\in \left(0,1/2\right]$, $\alpha_{17}\geq 4/3$ for $\alpha\in\left(1/2,3/4\right]$, and $\alpha_{17}\geq 32/17$ for $\alpha\in\left(3/4,1\right]$. By the monotonicity of the $\alpha_t$ sequence w.r.t. $t$, and noting that $\alpha_{t}=6$ for $1\leq t\leq 16$, we have $\alpha_t>1$ for $\alpha\in [0,1]$, which shows that $\tau_t=1/\alpha_t\in(0,1)$. Note that $\tau_t\leq \max \left\{1/6,\alpha_{17}^{-1} \right\}<1$, by choosing $c= \max \left\{2, b^{-1}\max \left\{6/5,\left(1-\alpha_{17}^{-1}\right)^{-1} \right\}\right\}+1$, we have $1-\tau_t-\xi\in (0,1)$. The fact that $\alpha_{17}$ is uniformly bounded away from $1$ guarantees that the constant in $c$ does not explode for any choice of $\alpha\in[0,1]$. Since $b\geq 1$ and $\alpha_{17}\geq 4/3$ for all $\alpha\in [0,1]$, there exists a constant $C>0$ that does not depend on any problem-dependent constant (e.g., $n$) such that\footnote{For example, one can take $C=5$.}
\begin{equation}
\label{lemma 1, equation step 7}
    c\leq C, \ \forall \alpha\in[0,1].
\end{equation}
And by the choice of $c$, we also have $\xi\in(0,1)$. Altogether, the following inequalities are ensured
\begin{equation}
\label{lemma 1, equation step 8}
    \tau_t\in(0,1), \ \xi\in(0,1), \ (1-\xi-\tau_t)\in(0,1), \ \forall t\in\mathbb{N}^{+}.
\end{equation}
The claimed inequalities in Lemma \ref{lemma, well-definedness}, namely (\ref{lemma 1, equation step 4}), (\ref{lemma 1, equation step 6}), (\ref{lemma 1, equation step 7}), and (\ref{lemma 1, equation step 8}) are proved. They ensure the well-definedness of Algorithm \ref{acceleration framework} and are fundamental to its convergence proof. $\hfill\qedsymbol$

\section{Bounding the Variance of the SVRG Estimator}
\label{proof of SVRG variance upper bound}

The following lemma mimics Lemma $2.4$ of \cite{allen2018katyusha}.

\begin{lemma}
\label{lemma, variance control}
In Algorithm \ref{acceleration framework}, the variance of the mini-batch version of the SVRG estimator $\Tilde{g}_{t+1}$ can be bounded as follows:
    \begin{equation*}
        \mathbb{E}_{J_t}\left[\Vert \Tilde{g}_{t+1}-\nabla f(x_{t+1})\Vert^2 \right]\leq \frac{2L}{b}\left(f(w_t)-f(x_{t+1})-\langle \nabla f(x_{t+1}),w_t-x_{t+1}\rangle \right).
    \end{equation*}
\end{lemma}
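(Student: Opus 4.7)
\textbf{\textit{Proof proposal for Lemma \ref{lemma, variance control}.}} The plan is to follow the standard variance-reduction blueprint: establish unbiasedness, reduce the mini-batch variance to a single-index second moment, and finally apply the convex $L$-smooth ``co-coercivity'' identity to each component. All expectations below are conditional on $x_{t+1}$ and $w_t$, so both are deterministic under $\mathbb{E}_{J_t}$.

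First, I would observe that for each fixed $j\in[n]$, the marginal of $J_t$ places $j$ into the batch with probability $b/n$, so
\begin{equation*}
\mathbb{E}_{J_t}\Bigl[\tfrac{1}{b}\sum_{j\in J_t}(\nabla f_j(x_{t+1})-\nabla f_j(w_t))\Bigr]=\nabla f(x_{t+1})-\nabla f(w_t),
\end{equation*}
so $\mathbb{E}_{J_t}[\tilde g_{t+1}]=\nabla f(x_{t+1})$. Setting $X_j:=\nabla f_j(x_{t+1})-\nabla f_j(w_t)-(\nabla f(x_{t+1})-\nabla f(w_t))$, we obtain $\tilde g_{t+1}-\nabla f(x_{t+1})=\frac{1}{b}\sum_{j\in J_t}X_j$ with $\mathbb{E}_j[X_j]=0$ when $j$ is drawn uniformly from $[n]$.

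The key step I would carry out next is the mini-batch variance reduction
\begin{equation*}
\mathbb{E}_{J_t}\Bigl\|\tfrac{1}{b}\sum_{j\in J_t}X_j\Bigr\|^2\leq \tfrac{1}{b}\,\mathbb{E}_{j\sim\mathrm{Unif}[n]}\|X_j\|^2\leq \tfrac{1}{b}\,\mathbb{E}_{j}\|\nabla f_j(x_{t+1})-\nabla f_j(w_t)\|^2.
\end{equation*}
The first inequality is the well-known fact that uniform sampling without replacement has variance no larger than sampling with replacement (equivalently, a direct expansion $\frac{1}{b^2}\sum_{j,k\in J_t}\mathbb{E}\langle X_j,X_k\rangle$ using that cross terms have a non-positive contribution for centered $X_j$); the second inequality uses $\mathbb{E}\|X-\mathbb{E}X\|^2\leq\mathbb{E}\|X\|^2$.

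Finally, from Assumptions \textbf{A2} and \textbf{A3} the standard consequence of $L$-smoothness plus convexity gives, for every $j\in[n]$,
\begin{equation*}
\|\nabla f_j(x_{t+1})-\nabla f_j(w_t)\|^2 \leq 2L\bigl(f_j(w_t)-f_j(x_{t+1})-\langle\nabla f_j(x_{t+1}),w_t-x_{t+1}\rangle\bigr),
\end{equation*}
which is derived by applying smoothness at $x_{t+1}-\tfrac{1}{L}(\nabla f_j(x_{t+1})-\nabla f_j(w_t))$ and using convexity to relate the resulting expression back to $f_j(w_t)$. Averaging this bound over $j\in[n]$ turns $f_j$ into $f$ and $\nabla f_j$ into $\nabla f$, and plugging into the previous display yields the claimed inequality. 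The only delicate point in the whole argument is justifying the without-replacement reduction in the second paragraph; the rest is a direct application of standard convex analysis. $\hfill\qedsymbol$
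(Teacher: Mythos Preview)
Your proof is correct and follows essentially the same route as the paper's: establish unbiasedness, reduce the mini-batch variance to the single-index quantity $\frac{1}{bn}\sum_{j=1}^n\|\nabla f_j(x_{t+1})-\nabla f_j(w_t)\|^2$, then apply componentwise co-coercivity and average. The only cosmetic difference is that you center $X_j$ up front and invoke the without-replacement~$\leq$~with-replacement variance comparison, whereas the paper keeps $X_j$ uncentered, applies $\mathbb{E}\|Y-\mathbb{E}Y\|^2\leq\mathbb{E}\|Y\|^2$, and defers the combinatorial sampling identity to Lemmas~18--19 of \cite{driggs2022accelerating}.
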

\begin{proof}
For a given iteration $t\in \mathbb{N}^{+}$, define $X_j=\nabla f_j(x_{t+1})-\nabla f_j(w_t)$. Then
\begin{equation}
\label{lemma 2, equation 1}
    \mathbb{E}_{J_t}\sum_{j\in J_t}X_j=\sum_{\text{all possible} \  J_t }\frac{1}{C_n^b}\sum_{j\in J_t} X_j=\frac{1}{C_n^b}\sum_{j=1}^n C_{n-1}^{b-1}X_j=\frac{b}{n}\sum_{j=1}^n X_j,
\end{equation}
where $C_n^m=n!/\left[m!(n-m)! \right]$. Hence
\begin{align*}
    \mathbb{E}_{J_t}\Vert \Tilde{g}_{t+1}-\nabla f(x_{t+1})\Vert^2 &=\mathbb{E}_{J_t}\Vert \frac{1}{b}\sum_{j\in J_t} X_j+\nabla f(w_t)-\nabla f(x_{t+1})\Vert^2 \\
    &=\mathbb{E}_{J_t} \Vert \frac{1}{b}\sum_{j\in J_t}X_j-\mathbb{E}_{J_t}\frac{1}{b}\sum_{j\in J_t}X_j\Vert^2 \\
    &\leq \frac{1}{b^2}\mathbb{E}_{J_t}\Vert \sum_{j\in J_t} X_j\Vert^2 \\
    &=\frac{1}{b n}\sum_{j=1}^n \Vert \nabla f_j(x_{t+1})-\nabla f_j(w_t)\Vert^2 \\
    &\leq \frac{2L}{b}\left(f(w_t)-f(x_{t+1})-\langle \nabla f(x_{t+1}),w_t-x_{t+1}\rangle\right),
\end{align*}
where in the second equality we apply (\ref{lemma 2, equation 1}), the third equality can be derived in a way similarly to (\ref{lemma 2, equation 1}) and we refer to Lemma $18$ and Lemma $19$ of \cite{driggs2022accelerating}, in the last inequality the following property is applied:
\begin{equation*}
    \frac{1}{2L}\Vert \nabla g(x)-\nabla g(y)\Vert^2\leq g(x)-g(y)-\langle \nabla g(y),x-y\rangle, \ \forall x,y\in\mathbb{R}^d,
\end{equation*}
where $g:\mathbb{R}^d\rightarrow \mathbb{R}$ is a L-smooth convex function, and this property is a standard tool which can be found in \cite{nesterov2018lectures}.
\end{proof}

\section{Proof of Theorem \ref{theorem, convergence}}
\label{proof of theorem 1}

Note that 
\begin{align}
    \alpha_t \left(f(x_{t+1})-f(x^*)\right)&\leq \alpha_t\langle \nabla f(x_{t+1}),x_{t+1}-x^*\rangle  \nonumber \\
    &=\alpha_t \langle \nabla f(x_{t+1}),z_t-x^*\rangle+\alpha_t \langle \nabla f(x_{t+1}),x_{t+1}-z_t\rangle \nonumber \\
    &=\mathbb{E}_{J_t}\alpha_t \langle \Tilde{g}_{t+1},z_t-x^*\rangle+\alpha_t\langle \nabla f(x_{t+1}),x_{t+1}-z_t\rangle \nonumber \\
    &= \mathbb{E}_{J_t}\alpha_t\langle \Tilde{g}_{t+1},z_t-z_{t+1}\rangle+\mathbb{E}_{J_t}\alpha_t\langle \Tilde{g}_{t+1},z_{t+1}-x^*\rangle \nonumber \\
    &+\alpha_t\langle \nabla f(x_{t+1}),x_{t+1}-z_t\rangle,
\label{theorem 1, equation 1}
\end{align}
where in the first inequality we apply the convexity of $f$, and the second equality is by the unbiasedness of the estimator $\Tilde{g}_{t+1}$. The three terms in the RHS of (\ref{theorem 1, equation 1}) are separately bounded as follows. For the first term,
\begin{align}
    \alpha_t\langle \Tilde{g}_{t+1},z_t-z_{t+1}\rangle &=\alpha_t\langle \nabla f(x_{t+1}),z_t-z_{t+1}\rangle+\alpha_t\langle \Tilde{g}_{t+1}-\nabla f(x_{t+1}),z_t-z_{t+1}\rangle \nonumber \\
    &=\alpha_t^2 \langle \nabla f(x_{t+1}),x_{t+1}-y_{t+1}\rangle+\alpha_t^2\langle \Tilde{g}_{t+1}-\nabla f(x_{t+1}),x_{t+1}-y_{t+1}\rangle \nonumber \\
    &\leq \alpha_t^2\left[f(x_{t+1})-f(y_{t+1})+\frac{L}{2}\Vert x_{t+1}-y_{t+1}\Vert^2 \right]+\frac{\alpha_t^2}{2L c}\Vert \Tilde{g}_{t+1}-\nabla f(x_{t+1})\Vert^2 \nonumber \\
    &+\frac{\alpha_t^2 L c}{2}\Vert x_{t+1}-y_{t+1}\Vert^2 \nonumber \\
    &\leq \alpha_t^2 \left[f(x_{t+1})-f(y_{t+1})+\frac{L}{2}\Vert x_{t+1}-y_{t+1}\Vert^2 \right]+\frac{\alpha_t^2 L c}{2}\Vert x_{t+1}-y_{t+1}\Vert^2 \nonumber \\
    &+\frac{\alpha_t^2 }{b c } \left[f(w_t)-f(x_{t+1})-\langle \nabla f(x_{t+1}),w_t-x_{t+1}\rangle \right],
    \label{theorem 1, equation 2}
\end{align}
where in the second equality the relation $z_t-z_{t+1}=\alpha_t (x_{t+1}-y_{t+1})$ is used, in the first inequality the $L$-smoothness of $f$ and the Young inequality are used, and the last inequality is by Lemma \ref{lemma, variance control}. For the term $\alpha_t\langle \Tilde{g}_{t+1},z_{t+1}-x^*\rangle$, we have
\begin{align}
    \alpha_t\langle \Tilde{g}_{t+1},z_{t+1}-x^*\rangle &=\langle \frac{1}{\eta} (z_t-z_{t+1} )-\alpha_t \bar{g}_{t+1}, z_{t+1}-x^*\rangle \nonumber \\
    &=\frac{1}{\eta}\langle z_t-z_{t+1},z_{t+1}-x^*\rangle-\alpha_t \langle \bar{g}_{t+1},z_{t+1}-x^*\rangle \nonumber \\
    &\leq \frac{1}{2\eta}\left(\Vert z_t-x^*\Vert^2-\Vert z_{t+1}-x^*\Vert^2-\Vert z_t-z_{t+1} \Vert^2\right) \nonumber \\
    &+\alpha_t\left(l(x^*)-l(z_{t+1}) \right),
    \label{theorem 1, equation 3}
\end{align}
where $\bar{g}_{t+1}$ is a subgradient of $l$ at $z_{t+1}$, and in the first equality we use the equivalent update formula\footnote{This is derived from the optimality of $z_{t+1}$ in the optimization problem
\begin{equation*}
    z_{t+1}=\arg\min_{z\in\mathbb{R}^d} \left\{\frac{1}{2\alpha_t\eta}\Vert z-z_t\Vert^2+\langle \Tilde{g}_{t+1},z\rangle+l(z) \right\},
\end{equation*} which is a standard result and can be found in \cite{doi:10.1137/1.9781611974997}.} of $z_{t+1}$, i.e., $z_{t+1}=z_{t}-\alpha_{t}\eta\left(\Tilde{g}_{t+1}+\bar{g}_{t+1} \right)$, in the last inequality we use the convexity of $l$ and the three-point property $\langle a-b,b-c\rangle=(1/2)\cdot (\Vert a-c\Vert^2-\Vert a-b\Vert^2-\Vert b-c\Vert^2), \ \forall a,b,c\in\mathbb{R}^d$. Recall  $x_{t+1}=\tau_t z_t+\xi w_t+(1-\xi-\tau_t)y_t$ in Algorithm \ref{acceleration framework} and the setting $\tau_t\alpha_t=1$, we have
\begin{equation}
\label{theorem 1, equation 4}
    x_{t+1}-z_t=\alpha_t\xi (w_t-x_{t+1})+\alpha_t (1-\xi-\tau_t)(y_t-x_{t+1}).
\end{equation}
Then for the term $\alpha_t \langle \nabla f(x_{t+1}),x_{t+1}-z_t\rangle$, we have
\begin{align}
    & \quad \  \alpha_t \langle \nabla f(x_{t+1}),x_{t+1}-z_t\rangle \nonumber \\
    &=\alpha_t^2\xi \langle \nabla f(x_{t+1}),w_t-x_{t+1}\rangle+\alpha_t^2 (1-\xi-\tau_t)\langle \nabla f(x_{t+1}),y_t-x_{t+1}\rangle \nonumber \\
    &\leq \alpha_t^2\xi\langle \nabla f(x_{t+1}),w_t-x_{t+1}\rangle+\alpha_t^2(1-\xi-\tau_t)(f(y_t)-f(x_{t+1})),
    \label{theorem 1, equation 5}
\end{align}
where the first equality is by (\ref{theorem 1, equation 4}), the last inequality is by the convexity of $f$ and Lemma \ref{lemma, well-definedness} which ensures $1-\xi-\tau_t\geq 0$. Taking expectation and combining (\ref{theorem 1, equation 1}), (\ref{theorem 1, equation 2}), (\ref{theorem 1, equation 3}), and (\ref{theorem 1, equation 5}), we have
\begin{align}
    & \quad \ \alpha_t\mathbb{E}\left(f(x_{t+1})-f(x^*)) \right) \nonumber \\
    &\leq \alpha_t^2 \mathbb{E}\left[f(x_{t+1})-f(y_{t+1})+\frac{L}{2}\Vert x_{t+1}-y_{t+1}\Vert^2 \right] 
    +\frac{\alpha_t^2 L c}{2}\mathbb{E}\Vert x_{t+1}-y_{t+1}\Vert^2 \nonumber \\
    &+\alpha_t^2 \xi\mathbb{E}\left[f(w_t)-f(x_{t+1})-\langle \nabla f(x_{t+1}),w_t-x_{t+1}\rangle \right] \nonumber \\
    &+\frac{1}{2\eta}\mathbb{E}\left(\Vert z_t-x^*\Vert^2-\Vert z_{t+1}-x^*\Vert^2-\Vert z_t-z_{t+1}\Vert^2 \right) +\alpha_t\mathbb{E}\left(l(x^*)-l(z_{t+1}) \right) \nonumber \\
    &+\alpha_t^2\xi\mathbb{E}\langle \nabla f(x_{t+1}),w_t-x_{t+1}\rangle+\alpha_t^2(1-\xi-\tau_t)\mathbb{E}\left(f(y_t)-f(x_{t+1})\right).
    \label{theorem 1, equation 6}
\end{align}
Note again that $z_{t+1}-z_t=\alpha_t(y_{t+1}-x_{t+1})$ by the $y_{t+1}$ update in Algorithm \ref{acceleration framework}, inequality (\ref{theorem 1, equation 6}) becomes
\begin{align}
    \alpha_t^2\mathbb{E}\left(f(y_{t+1})-F(x^*) \right)&\leq \alpha_t^2(1-\xi-\tau_t)\mathbb{E}\left(f(y_t)-F(x^*) \right)+\alpha_t^2\xi\mathbb{E}\left(f(w_t)-F(x^*) \right) \nonumber \\
    &-\alpha_t\mathbb{E}l(z_{t+1})+\alpha_t^2\left(\frac{L}{2}+\frac{L c}{2}-\frac{1}{2\eta}\right)\mathbb{E}\Vert x_{t+1}-y_{t+1}\Vert^2 \nonumber \\
    &+\frac{1}{2\eta}\mathbb{E}\Vert z_t-x^*\Vert^2-\frac{1}{2\eta}\mathbb{E}\Vert z_{t+1}-x^*\Vert^2.
    \label{theorem 1, equation 7}
\end{align}
By update formulae of $x_{t+1}$ and $y_{t+1}$ in Algorithm \ref{acceleration framework}, we have
\begin{equation*}
    y_{t+1}=\tau_t z_{t+1}+\xi w_t+(1-\xi-\tau_t)y_t,
\end{equation*}
it follows (by Lemma \ref{lemma, well-definedness}, $\tau_t$, $\xi$, and $(1-\xi-\tau_t)$ all belong to $(0,1)$)
\begin{equation}
\label{theorem 1, equation 8}
    l(y_{t+1})\leq \tau_t l(z_{t+1})+\xi l(w_t)+(1-\xi-\tau_t)l(y_t).
\end{equation}
Note that $L/2+L c/2-1/(2\eta)\leq 0$ since $\eta\leq 1/(c L+L)$. Then combining (\ref{theorem 1, equation 7}) and (\ref{theorem 1, equation 8}), we arrive at
\begin{align}
    \alpha_t^2 \mathbb{E}\left(F(y_{t+1})-F(x^*) \right)&\leq \alpha_t^2(1-\xi-\tau_t)\mathbb{E}\left(F(y_t)-F(x^*) \right)+\alpha_t^2\xi\mathbb{E}\left(F(w_t)-F(x^*) \right) \nonumber \\
    &+\frac{1}{2\eta}\mathbb{E}\Vert z_t-x^*\Vert^2-\frac{1}{2\eta}\mathbb{E}\Vert z_{t+1}-x^*\Vert^2.
    \label{theorem 1, equation 9}
\end{align}
By the tower property of conditional expectation and the probabilistic update formula of $w_{t+1}$ in Algorithm \ref{acceleration framework}, we have
\begin{equation}
    \mathbb{E}\left(F(w_{t+1})-F(x^*)\right)\leq (1-p_t) \mathbb{E}\left(F(w_t)-F(x^*) \right)+p_t\mathbb{E}\left(F(y_t)-F(x^*) \right).
    \label{theorem 1, equation 10}
\end{equation}
For $t\in \mathbb{N}$, define
\begin{align*}
    \mathcal{L}_{t+1} =&\alpha_t^2 \mathbb{E}\left(F(y_{t+1})-F(x^*) \right)+\left(\Tilde{\alpha}_0+\alpha_0^2-\alpha_t^2+\sum_{j=1}^t \alpha_j \right)\mathbb{E}\left(F(w_{t+1})-F(x^*) \right) \\
    &+\frac{1}{2\eta}\mathbb{E}\Vert z_{t+1}-x^*\Vert^2.
\end{align*}
Multiplying (\ref{theorem 1, equation 10}) by $\Tilde{\alpha}_0+\alpha_0^2-\alpha_t^2+\sum_{j=1}^t \alpha_j$ and combining the resultant inequality with (\ref{theorem 1, equation 9}), we obtain (note that $\Tilde{\alpha}_0+\alpha_0^2-\alpha_t^2+\sum_{j=1}^t \alpha_j\geq 0$ by Lemma \ref{lemma, well-definedness})
\begin{equation}
    \mathcal{L}_{t+1}\leq \mathcal{L}_t, \ \forall t\in\mathbb{N},
    \label{theorem 1, equation 11}
\end{equation}
where the inequality follows from our choice of $\tau_t$ and $p_t$, i.e., one can rewrite the coefficients with solely the $\{\alpha_t\}_{t\in\mathbb{N}}$ sequence to see that the inequality holds. Using (\ref{theorem 1, equation 11}) repeatedly, the theorem is then proved. $\hfill\qedsymbol$

\section{Proof of Theorem \ref{theorem, complexity}}
\label{proof of theorem 2}

We divide the proof into cases where the $\{\alpha_t\}$ sequence has different growing rate, which is determined by $\alpha$ in Lemma \ref{lemma, well-definedness}. In all cases, we always consider the convergence of the quantity $\mathbb{E}\left(F(w_t)-F(x^*)\right)$.

In the case where $\alpha=0$, we have $\alpha_t\equiv 6$ for all $t\in\mathbb{N}$. In this case, we have
\begin{equation*}
    p_t=\frac{36\xi+6}{\Tilde{\alpha}_0+6t}.
\end{equation*}
By (\ref{theorem 1, equation 0}), Algorithm \ref{acceleration framework} needs $T=\mathcal{O}(1/\epsilon)$ iterations to reach an $\epsilon$-accurate solution of (\ref{problem setting}). In expectation, Algorithm \ref{acceleration framework} calls IFO $\mathcal{O}(b+np_t)$ times per iteration. Hence the IFO complexity in expectation $\mathbb{E}C_{\text{comp}}$ is $\mathcal{O}(b/\epsilon+n\sum_{t=1}^{\lceil 1/\epsilon \rceil}p_t)$, where the dominant term can be further bounded as follows
\begin{align*}
    \frac{b}{\epsilon}+n\sum_{t=1}^{\lceil 1/ \epsilon \rceil}p_t &\leq \frac{b}{\epsilon}+\frac{n\left(36\xi+6\right)}{6}\sum_{t=1}^{\lceil 1/\epsilon \rceil} \frac{1}{t} \\
    &=\frac{b}{\epsilon}+n\left(6\xi+1 \right)+n\left(6\xi+1\right)\sum_{t=2}^{\lceil 1/\epsilon \rceil} \frac{1}{t} \\
    &\leq \frac{b}{\epsilon}+7n+7n\log\left(\lceil 1/\epsilon \rceil\right),
\end{align*}
where the last inequality is by $\xi\in(0,1)$ and $\sum_{t=2}^{\lceil 1/\epsilon \rceil}(1/t)\leq \int_{1}^{\lceil 1/\epsilon \rceil}(1/t)\text{d}t$. In addition, when $n< \lceil 1/\epsilon\rceil$, 
\begin{align*}
    \frac{b}{\epsilon}+n\sum_{t=1}^{\lceil 1/\epsilon\rceil}p_t &\leq \frac{b}{\epsilon}+\frac{n(36\xi+6)}{6}\sum_{t=1}^{n}\frac{1}{t}+\frac{n(36\xi+6)}{6}\sum_{t=n+1}^{\lceil 1/\epsilon\rceil}\frac{1}{t} \\
    &\leq \frac{b}{\epsilon}+7n+7n\log(n)+7n\sum_{t=n+1}^{\lceil 1/\epsilon\rceil}\frac{1}{n} \\
    &\leq \frac{b}{\epsilon}+7n\log(n)+7\lceil 1/\epsilon\rceil.
\end{align*}
In conclusion, in the $\alpha=0$ case, the optimal choice of the mini-batch size is $b^{*}=1$, and the IFO complexity in expectation is $\mathcal{O}\left(n\min \{\log(1/\epsilon),\log(n)\}+1/\epsilon \right)$.

In the case where $\alpha\in (0,1]$, we have
\begin{align*}
    \Tilde{\alpha}_0+\alpha_0^2-\alpha_{t}^2+\sum_{j=1}^{t}\alpha_j &=\Tilde{\alpha}_0+132-a_{\alpha}^2t^{2\alpha}+a_{\alpha}\sum_{j=17}^t j^{\alpha} \\
    &\geq \Tilde{\alpha}_0+132-\frac{a_{\alpha}}{\alpha+1}16^{\alpha+1}-a_{\alpha}^2t^{2\alpha}+\frac{a_{\alpha}}{\alpha+1}t^{\alpha+1},
\end{align*}
where the inequality is by $\sum_{j=17}^t j^{\alpha}\geq \int_{16}^t u^{\alpha}\text{d}u$. Note that $132\geq \left(a_{\alpha}16^{\alpha+1} \right)/(\alpha+1)$ for all $\alpha\in \left(0,1\right]$. For $\alpha\in(0,1)$, define
\begin{equation*}
    T_{\alpha}=\left[2a_{\alpha}(\alpha+1)\right]^{\frac{1}{1-\alpha}},
\end{equation*}
then we have $a_{\alpha}^2t^{2\alpha}\leq \left(a_{\alpha}t^{\alpha+1}\right)/(2\alpha+2)$ for all $t\in \mathbb{N}^{+}\cap \left[T_{\alpha},+\infty \right)$. One easily checks that $T_{\alpha}\leq 17$ for all $\alpha\in (0,1)$. Moreover, when $\alpha=1$, we have $a_{\alpha}|_{\alpha=1}=1/4$, and $-a_{\alpha}^2 t^{2\alpha}+\left(a_{\alpha}t^{\alpha+1}\right)/(\alpha+1)=\frac{t^2}{16}$. For $\alpha\in\left(0,1\right]$, define
\begin{equation*}
    \Tilde{a}_{\alpha} = \begin{cases}
    \frac{a_{\alpha}}{2\alpha+2} & \text{when} \ \alpha\in(0,1) \\
    \frac{1}{16} & \text{when} \ \alpha=1
    \end{cases}
    ,
\end{equation*}
then for any $\alpha\in\left(0,1\right]$, when $t\geq 17$, we have $\Tilde{\alpha}_0+\alpha_0^2-\alpha_{t}^2+\sum_{j=1}^{t}\alpha_j\geq \Tilde{a}_{\alpha}t^{\alpha+1}$. Since $T_{\alpha}$ admits a uniform constant upper bound $17$ for any $\alpha\in \left(0,1\right]$, one can find a $\Tilde{\Tilde{a}}_{\alpha}>0$ (which is free from problem dependent parameters and possibly smaller than $\Tilde{a}_{\alpha}$) so that $\Tilde{\alpha}_0+\alpha_0^2-\alpha_{t}^2+\sum_{j=1}^{t}\alpha_j\geq \Tilde{\Tilde{a}}_{\alpha}t^{\alpha+1}$ for all $t\in\mathbb{N}^{+}$.  Hence by Theorem \ref{theorem, convergence}, Algorithm \ref{acceleration framework} needs $T=\mathcal{O}\left(1/\epsilon^{1/\left(\alpha+1\right)}\right)$ iterations to reach an $\epsilon$-accurate solution of (\ref{problem setting}). To estimate
\begin{equation}
\label{theorem 2, equation 1}
    \mathbb{E}C_{\text{comp}}=b/\epsilon^{1/\left(\alpha+1\right)}+n\sum_{t=1}^{\lceil1/\epsilon^{ 1/\left(\alpha+1\right)}\rceil}p_t,
\end{equation}
the update probability $p_t$ is bounded as follows 
\begin{align}
    p_t &\leq \Tilde{a}_{\alpha}^{-1}\frac{a_{\alpha}^2(t-1)^{2\alpha}-(1-\xi)a_{\alpha}^2t^{2\alpha}+a_{\alpha}t^{\alpha}}{t^{\alpha+1}} \nonumber \\
    &=\Tilde{a}_{\alpha}^{-1} \frac{a_{\alpha}^2\left[(t-1)^{2\alpha}-t^{2\alpha} \right]+a_{\alpha}^2\xi t^{2\alpha}+a_{\alpha}t^{\alpha}}{t^{\alpha+1}},
    \label{theorem 2, equation 2}
\end{align}
where $t\geq 17$ and $\xi=1/(b c)$ by our setting. Note that although for $t<17$ the corresponding $p_t$ is not upper bounded carefully, we can always upper bound it by $1$, which at most leads to an addend being constant multiple of $n$ in the complexity, i,e., $\mathcal{O}(n)$. For $t\geq 17$, the inequality (\ref{theorem 2, equation 1}) implies
\begin{equation}
\label{theorem 2, equation 3}
    p_t\leq \Tilde{a}_{\alpha}^{-1}a_{\alpha}^2\xi \frac{1}{t^{1-\alpha}}+\Tilde{a}_{\alpha}^{-1}a_{\alpha}\frac{1}{t}.
\end{equation}
Note that 
\begin{align}
    Q_1(\epsilon,\alpha)=\sum_{t=17}^{\lceil 1/\epsilon^{1/\left(\alpha+1\right)}\rceil} \frac{1}{t^{1-\alpha}} &\leq \int_{16}^{\lceil 1/\epsilon^{1/\left(\alpha+1\right)}\rceil}\frac{1}{t^{1-\alpha}}\text{d}t=\mathcal{O}\left( \frac{1}{\alpha}\frac{1}{\epsilon^{\frac{\alpha}{\alpha+1}}}\right), \label{theorem 2, equation 4} \\ Q_2(\epsilon,\alpha)&=\sum_{t=17}^{\lceil 1/\epsilon^{1/\left(\alpha+1\right)}\rceil }\frac{1}{t}=\mathcal{O}\left( \frac{\log(1/\epsilon)}{\alpha+1}\right). \label{theorem 2, equation 5}
\end{align}
The quantity $Q_1(\epsilon,\alpha)$ is essentially upper bounded by $(1/\alpha)\cdot (1/\epsilon^{(\alpha/(\alpha+1))})$, which explodes when $\alpha\rightarrow 0^{+}$, hence the $\alpha\in\left(0,1/2\right]$ case is further handled as follows. Without loss of generality, assume $\epsilon\in (0,1)$, then for $\alpha\in \left(0,\min\left\{\hat{\alpha},1/10\right\}\right]$, when $\hat{\alpha}=\log(2)/\log(\lceil 1/\epsilon\rceil)$, i.e., $n>\lceil 1/\epsilon\rceil$,
\begin{equation*}
    Q_1(\epsilon,\alpha)=\sum_{t=17}^{\lceil 1/\epsilon^{1/(\alpha+1)}\rceil} \frac{t^{\alpha}}{t}\leq \sum_{t=17}^{\lceil 1/\epsilon^{(1/(\alpha+1))} \rceil}\frac{\left(\lceil 1/\epsilon \rceil\right)^{\hat{\alpha}}}{t}=\mathcal{O}(\log(1/\epsilon)).
\end{equation*}
When $\hat{\alpha}=\log(2)/\log(n)$, i.e., $n<\lceil 1/\epsilon\rceil$, if $\lceil 1/\epsilon^{1/(\alpha+1)}\rceil\leq n<\lceil 1/\epsilon\rceil$,
\begin{equation*}
    Q_1(\epsilon,\alpha)\leq \sum_{t=17}^{n}\frac{t^{\alpha}}{t}\leq \sum_{t=17}^{n} \frac{n^{\hat{\alpha}}}{t}=\mathcal{O}(\log(n)),
\end{equation*}
if $n<\lceil 1/\epsilon^{1/(\alpha+1)}\rceil$,
\begin{align*}
    Q_1(\epsilon,\alpha) &\leq \sum_{t=17}^{\lceil 1/\epsilon^{1/(\alpha+1)}\rceil}\frac{1}{t^{1-\alpha}}=\sum_{t=17}^{n}\frac{1}{t^{1-\alpha}}+\sum_{t=n+1}^{\lceil 1/\epsilon^{1/(\alpha+1)}\rceil}\frac{1}{n^{1-\alpha}} \\
    &\leq \sum_{t=17}^{n}\frac{n^{\alpha}}{t}+\sum_{t=n+1}^{\lceil 1/\epsilon^{1/(\alpha+1)}\rceil}\frac{n^{\alpha}}{n}\leq \sum_{t=17}^{n}\frac{n^{\hat{\alpha}}}{t}+\sum_{t=n+1}^{\lceil 1/\epsilon^{1/(\alpha+1)}\rceil}\frac{n^{\hat{\alpha}}}{n} \\
    &=\mathcal{O}\left(\log(n)+\frac{1}{n}\frac{1}{\epsilon^{1/(\alpha+1)}} \right).
\end{align*}
Hence for $\alpha\in \left(0,\min\left\{\hat{\alpha},1/10\right\}\right]$, 
\begin{equation}
\label{theorem 2, equation 6}
    Q_1(\epsilon,\alpha)=\mathcal{O}\left(\left(1+\frac{1}{n}\frac{1}{\epsilon^{1/(\alpha+1)}}\right)\min\left\{\log(n),\log(1/\epsilon) \right\}\right).
\end{equation}
For $\alpha\in \left(\min\{\hat{\alpha},1/10\},1 \right]$, we have $1/\alpha=\mathcal{O}(\min\{\log(n),\log(1/\epsilon)\})$, then by (\ref{theorem 2, equation 4}), 
\begin{equation}
\label{theorem 2, equation 7}
    Q_1(\epsilon,\alpha)=\mathcal{O}\left(\frac{1}{\epsilon^{\alpha/(\alpha+1)}} \min \left\{\log(n),\log(1/\epsilon) \right\} \right).
\end{equation}
By using the argument in the $\alpha=0$ case, it can be proved that when $n<\lceil 1/\epsilon^{1/(\alpha+1)}\rceil$, $Q_2(\epsilon,\alpha)=\mathcal{O}(\log(n)+(1/n)\epsilon^{(-1)/(\alpha+1)})$. Hence for all $\alpha\in \left(0,1\right]$, 
\begin{equation}
\label{theorem 2, equation 8}
    Q_2(\epsilon,\alpha)=\mathcal{O}\left(\left(1+\frac{1}{n}\frac{1}{\epsilon^{1/(\alpha+1)}}\right)\min\left\{\log(n),\log(1/\epsilon)\right\} \right).
\end{equation}
Combining (\ref{theorem 2, equation 1}), (\ref{theorem 2, equation 3}), (\ref{theorem 2, equation 6}), (\ref{theorem 2, equation 7}), and (\ref{theorem 2, equation 8}), and noting that $\xi=1/(b c)$, the complexity in expectation of Algorithm \ref{acceleration framework} is
\begin{equation*}
    \mathbb{E}C_{\text{comp}}=\begin{cases}
    \mathcal{O}\left(\frac{b}{\epsilon^{1/(\alpha+1)}}+n\min\left\{\log(n),\log(1/\epsilon)\right\} \right), \quad &\alpha\in\left(0,\min \left\{\hat{\alpha},1/10\right\}\right], \\
    \mathcal{O}\left(\frac{b}{\epsilon^{1/(\alpha+1)}}+\left(\frac{n}{b}\frac{1}{\epsilon^{\alpha/(\alpha+1)}}+n\right)\min\left\{\log(n),\log(1/\epsilon)\right\} \right), \quad &\alpha\in \left(\min\left\{\hat{\alpha},1/10\right\},1\right].
    \end{cases}
\end{equation*}
Note that the above complexity can include the $\mathcal{O}(n\log (1/\epsilon)+1/\epsilon)$ result for $\alpha=0$ as a special case. Theorem \ref{theorem, complexity} is thus proved. $\hfill\qedsymbol$

\section{Discussions Concerning the Analysis of the SIFAR Method}
\label{comments on SIFAR}

Some comments concerning the theoretical analysis of the SIFAR method are listed as follows.

First, Theorem $1$ of \cite{li2025sifar} is incorrect. In Lemma $3$ of \cite{li2025sifar}, an upper bound is given for the quantity $\mathbb{E}\left\{[\eta_{t-1}/(p_{t-1}\theta_{t-1}^2)][f(w_t)-f(x^*)]\right\}$ for any $t>t_1$. Since $t_1$ is a random variable (r.v.) that may take any positive integer value in the probability space, $t$ should also be a r.v. to satisfy the inequality $t>t_1$, hence the coefficient $\eta_{t-1}/(p_{t-1}\theta_{t-1}^2)$ is a r.v. since it depends on $t$ and $t_1$. In \cite{li2025sifar}, this coefficient is directly taken out of the expectation $\mathbb{E}$ and both sides of the inequality are multiplied by $(p_{t-1}\theta_{t-1}^2)/\eta_{t-1}$ to obtain an upper bound on the concerned quantity $\mathbb{E}(f(w_t)-f(x^*))$. However, this reasoning is illegal and the upper bounding of $\mathbb{E}(f(w_t)-f(x^*))$ is thus elusive. Hence, we believe Theorem $1$ of \cite{li2025sifar} is incorrect and the difficulty there is critical.

Consider two correlated r.v.'s $X_1$ and $X_2$, the estimation of terms of the form $\mathbb{E}(X_1 X_2)$\footnote{Sometimes such terms take the form of the expectation of a sum of products of correlated r.v.'s, i.e., $\mathbb{E}\sum_{t=0}^{T} X_t^{(1)} X_t^{(2)}$, where $X_t^{(1)}$ and $X_t^{(2)}$ are correlated r.v.'s for $0\leq t\leq T$. But we take the simple $T=0$ case for the ease of discussion.} is important in the analysis of randomized methods. In some cases, we know how to estimate $\mathbb{E}(X_2)$ or $\mathbb{E}(X_2)$ is the concerned quantity, and we want to apply the known inequality or obtain $\mathbb{E}(X_2)$ alone. For example, such terms arise when the step size involves the current and/or some previous (incremental) gradients, e.g., the step size is determined by the Barzilai-Borwein-type formulae \cite{barzilai1988two} or the AdaGrad-type formulae \cite{duchi2011adaptive}. Then whether one can successfully estimate such terms determines the success or failure of the convergence analysis \cite{ward2020adagrad}. Moreover, when the estimation is feasible, the way of doing the estimates critically affects the ultimate theoretical complexity \cite{kavis2022adaptive}. Hence, we believe the problem that Theorem $1$ of \cite{li2025sifar} faced is beyond a trivial mis-reasoning.

Second, Corollary $1$ of \cite{li2025sifar} is incorrect. Due to the fact that $t_1$ is a r.v., the iterations needed by the SIFAR algorithm to achieve an $\epsilon-$accurate solution is also a r.v. To the best of our knowledge, this is in contrast to other existing single-loop VR methods, where the required iterations is deterministic given the target accuracy $\epsilon$. Hence the analysis of the complexity should be conducted differently. For a single-loop VR method with probabilistic update of the checkpoint, denote the computational cost of the $t$th iteration as $C_t$ (note that $C_t$ is a r.v.), and denote the iterations needed to reach an $\epsilon$-accurate solution as $T(\epsilon)$. Then the total cost in expectation is $\mathbb{E}\sum_{t=0}^{T(\epsilon)} C_t$. When $T(\epsilon)$ is deterministic, we have $\mathbb{E}\sum_{t=0}^{T(\epsilon)}C_t=\sum_{t=0}^{T(\epsilon)} \mathbb{E}(C_t)$. However, when $T(\epsilon)$ is a r.v., certain propositions, e.g., the Wald's theorem or its generalized versions \cite{ash2000probability,durrett2019probability}, should be applied in such case (if they are applicable at all under the design and characteristics of the SIFAR algorithm).

Hence, despite the appealing and impressive novelty and innovation of \cite{li2025sifar}, we believe the theoretical results under the general convex setting of \cite{li2025sifar} are invalid.

\bibliography{sn-bibliography}

\section*{Statements and Declarations}

\bmhead{Funding}

This work is supported by National Key R\&D Program of China (2021YFA1000403) and the National Natural Science Foundation of China (Nos. 12431012, U23B2012).

\bmhead{Conflict of interest}

The authors have no relevant financial or non-financial interests to disclose.

\end{document}